\font\smallsc=cmcsc10
\font\smallsl=cmsl10
\newtheorem{Lem}{Lemma}[section]
\newtheorem{Prop}[Lem]{Proposition}
\newtheorem{Thm}[Lem]{Theorem}
\theoremstyle{definition}
\newcommand{\Fcal}{\mathcal{F}}
\newcommand{\Mcal}{\mathcal{M}}
\newcommand{\E}{\mathcal E}
\newcommand{\A}{\mathcal A}
\newcommand{\F}{\mathcal F}
\newcommand{\I}{\mathcal I}
\newcommand{\M}{\mathcal M}
\newcommand{\N}{\mathcal N}
\newcommand{\T}{\mathcal T}
\newcommand{\K}{\mathcal K}
\newcommand{\R}{\mathcal R}
\renewcommand{\L}{\mathcal L}
\renewcommand{\O}{\mathcal O}
\newcommand{\col}{\colon}
\newcommand{\ra}{\rightarrow}
\newcommand{\ol}{\overline}
\newcommand{\ul}{\underline}
\newcommand{\ox}{\otimes}
\newcommand{\IP}{\mathbb P}
\newcommand{\wh}{\widehat}
\newcommand{\J}{\ol J}
\newcommand{\Jcal}{\mathcal J}
\newcommand{\lra}{\longrightarrow}
\renewcommand{\:}{\colon}
\newcommand{\wt}{\widetilde}
\renewcommand{\P}{\mathcal P}
\newcommand{\rk}{\text{\rm rk}}
\def\cocoa{{\hbox{\rm C\kern-.13em o\kern-.07em C\kern-.13em o\kern-.15em A}}}
\begin{document}

\title{Semistable modifications of families of curves and compactified
Jacobians}
\author{Eduardo Esteves and Marco Pacini}
\maketitle

\begin{abstract} Given a family of nodal curves, a semistable
  modification of it is another family made up of curves obtained by 
inserting chains of rational curves of any given length at certain
nodes of certain curves of the original family. We give comparison
theorems 
between torsion-free, rank-1 sheaves in the former family and
invertible sheaves in the latter. We apply them to show that there are
functorial isomorphisms between the compactifications of relative
Jacobians of families of nodal curves constructed through Caporaso's
approach and those constructed through Pandharipande's approach.  
\noindent
\end{abstract}

\section{Introduction}

Compactifications of (generalized) Jacobians of 
(reduced, connected, projective) 
curves have been 
considered by several authors. Igusa \cite{Ig} was likely the first to study the 
degeneration of Jacobians of smooth curves when these 
specialize to nodal curves. Later, Mayer and Mumford \cite{MM} 
suggested realizing Igusa's 
degenerations using torsion-free, rank-1 sheaves to represent 
boundary points of compactifications of Jacobians. This was 
carried out by D'Souza \cite{DS} for irreducible curves, 
and by Oda and Seshadri \cite{OS} for reducible, nodal curves. In full 
generality, moduli spaces for torsion-free, rank-1 
(and also higher rank) sheaves on curves 
were constructed by Seshadri \cite{Sesh}.

As far as families are concerned, Altman and Kleiman 
\cite{AK1}, \cite{AK2}, \cite{AK3}, constructed relative 
compactifications of Jacobians 
for families of irreducible curves (and also higher dimension varieties). 
The author \cite{E01} continued their work, considering relative 
compactifications for any family of curves. The most general work in this 
respect is that of Simpson's \cite{simpson}, who constructed moduli 
spaces of coherent sheaves for families of schemes. 

It is also natural to ask whether a compactification of the relative Jacobian 
can be constructed over the moduli space $\ol M_g$ 
of Deligne--Mumford stable curves of genus $g$, for any $g\geq 2$. 
This is not a direct consequence of the works cited above for families, 
as there is no universal family of curves over $\ol M_g$. Such a 
compactification was constructed by Caporaso \cite{C}.

Caporaso's compactification represented a departure from the approach 
suggested by Mayer and Mumford, as the boundary points did not correspond 
to torsion-free, rank-1 sheaves on stable curves, but rather invertible 
sheaves on semistable curves of a special type, called quasistable 
curves,
%E>
where the exceptional components are isolated. 
The connection with the then usual approach was 
established one year later 
by Pandharipande \cite{Pand}, who 
constructed a compactification of the relative Jacobian (and also 
moduli spaces of vector bundles of any rank) over $\ol M_g$ using 
torsion-free, rank-1 sheaves, and showed that his compactification was 
isomorphic to Caporaso's.

More precisely, Caporaso produced a scheme $P^b_{d,g}$ 
coarsely representing the functor $\P^b_{d,g}$ 
that associates to each scheme $S$ the set of isomorphism classes of 
pairs $(Y/S,\L)$ of a family $Y/S$ of quasistable curves of 
(arithmetic) genus $g$, and an invertible sheaf $\L$ on $Y$ whose 
restrictions to the fibers of 
$Y/S$ have degree $d$ and satisfy certain ``balancing'' conditions; 
see Section~\ref{fi}.
On the other hand, Pandharipande produced a scheme 
$J^{ss}_{d,g}$ coarsely representing the functor $\Jcal^{ss}_{d,g}$ 
that associates to each scheme $S$ the set of isomorphism classes of 
pairs $(X/S,\I)$ of a family $X/S$ of stable curves of 
genus $g$, and a coherent $S$-flat sheaf $\I$ on $X$ whose 
restrictions to the fibers of 
$X/S$ are torsion-free, rank-1 sheaves of degree $d$ satisfying certain 
``semistability'' conditions; see Section~\ref{fi}.

Essentially, Pandharipande constructed in \cite{Pand}, 10.2, 
p.~465, a map of functors 
$\Phi^b\:\P^b_{d,g}\to\Jcal^{ss}_{d,g}$, and showed that the 
corresponding map of schemes $\phi\:P^b_{d,g}\to J^{ss}_{d,g}$ is bijective in 
10.3, p.~468. Then he 
used the normality of $J^{ss}_{d,g}$, which he had proved in Prop.~9.3.1, p.~464, 
to conclude that $\phi$ is an isomorphism in Thm.~10.3.1, p.~470. 

In the present article we prove that $\Phi^b$ is itself an isomorphism 
of functors, our Theorem \ref{isofcor}, which thus entails that $\phi$ 
is an isomorphism. We do so by describing the inverse map. In fact, 
our Proposition \ref{biss} implies that $\Phi^b$ is the 
restriction of a map $\Phi\:\P_{d,g}\to\Jcal_{d,g}$ 
between ``larger'' functors, without the 
extra conditions of ``balancing'' and ``semistability.'' And our 
Theorem \ref{isof} claims that $\Phi$ is an isomorphism, describing its 
inverse.

We feel that these results are of interest, not only because they give another 
proof of the existence of the isomorphism $\phi$, but also because of the 
immediate application to stacks. The point of view of stacks was 
applied to the problem of compactifying the relative Jacobian over 
$\ol M_g$ in \cite{TJ} and in \cite{CAJM}, 
the latter in the special situation where 
Deligne--Mumford stacks arise, and in more generality in \cite{melo1}. 
See \cite{melo2} as well, for compactifications over the stacks of pointed 
stable curves. It is a natural point of view, and should be further 
studied. We give here a small contribution to this study. 

Furthermore, we go beyond showing that $\Phi^b$ is an isomorphism. 
More generally, we study families of semistable curves, and give
comparison theorems between torsion-free rank-1 sheaves on nodal curves
and invertible sheaves on semistable modifications of them; see
Theorems \ref{famchain}, \ref{compadm} and \ref{famchain2}. 
Though technical, we believe 
these are useful theorems to have when working in the field. Indeed,
they have already proved fundamental in our study of Abel maps; see 
\cite{CEP}, from which \cite{ACP}, \cite{CP}, \cite{Pac1} and \cite{Pac2} derive. 
%E>
In \cite{CEP} we study the construction of degree-2 Abel maps for nodal curves, and we need to deal with invertible sheaves on semistable curves containing chains of two exceptional components; it is expected that longer chains will occur in the study of higher degree Abel maps.

Some of the results in these notes may be well-known to the specialists. 
For instance, Propositions \ref{L2I} and \ref{I2L}, are essentially stated in 
\cite{CCaCo}, Prop.~4.2.2, p.~3754, for whose proof the reader is
mostly 
refered to 
\cite{Falt} and \cite{TJ0}. However, detailed statements and proofs
are given here, together with generalizations and a more global
approach, which works over general base schemes. 

In short, here is how the paper is structured. In Section~\ref{2} we
describe our basic objects, torsion-free, rank-1 sheaves on families
of curves, present the notion of stability, and give cohomological
characterizations for the existence of certain inequalities for
degrees of invertible sheaves on chains of rational curves. 

In 
Section~\ref{3},  we prove our main result, Theorem~\ref{famchain},
which gives necessary and sufficient 
conditions under which the pushforward $\psi_*\L$ of an
invertible sheaf $\L$ under a map of curves $\psi\:Y\to X$
contracting exceptional components is torsion-free, rank-1. We give as
well sufficient conditions for when two invertible sheaves 
have the same pushforward. 
In Section~\ref{4}, we prove 
Theorem~\ref{famchain2}, which compares the various notions of
stability for $\L$ with those for $\psi_*\L$. In Section~\ref{5}, we
apply these theorems in the special situation where the exceptional
components of $Y$ are isolated. In addition, we show how to do the opposite 
construction, that is, how to get an invertible sheaf $\L$ on $Y$ 
from a torsion-free, rank-1 sheaf $\I$ on $X$ in such a way that 
$\I=\psi_*\L$. All the constructions apply to families of
curves. Then, in Section \ref{fi}, we apply them to produce an inverse
to $\Phi^b$. 

\section{Sheaves on curves}\label{2}

\subsection{Curves} 

A \emph{curve} is a reduced, connected, projective scheme of pure dimension 1 
over an algebraically closed field $K$. A curve may have several 
irreducible components, which will be simply called \emph{components}. 
We will always assume our curves to be \emph{nodal}, 
meaning that the singularities are \emph{nodes}, that is, 
analytically like the origin on the 
union of the coordinate axes of the plane $\mathbb A^2_K$. 

We say that a curve $X$ has \emph{genus} $g$ if 
$h^1(X,\O_X)=g$. This is in fact the so-called arithmetic genus, but the 
geometric genus will play no role here.

A \emph{subcurve} of a curve $X$ is the reduced union of a nonempty 
collection of its components. A subcurve is a curve if and 
only if it is connected. 
Given a proper subcurve $Y$ of $X$, we 
will let $Y'$ denote the \emph{complementary subcurve}, that is, the 
reduced union of the remaining components of $X$. Also, 
we let $k_Y$ denote the number of points of $Y\cap Y'$. 
Since $X$ is connected, $k_Y\geq 1$. A component $E$ of a curve $X$ is called 
\emph{exceptional} if $E$ is smooth, rational, $E\neq X$ and $k_E\leq 2$. 

We will call a curve $X$ \emph{semistable} 
if all exceptional components $E$ have $k_E=2$; \emph{quasistable} 
if, in addition, no two exceptional components meet; and \emph{stable} 
if there are no exceptional components.

A \emph{chain of rational curves} is a 
curve whose components are smooth and
rational and can be ordered, $E_1,\dots,E_n$, in such a way that 
$\#E_i\cap E_{i+1}=1$ for $i=1,\dots,n-1$ and  
$E_i\cap E_j=\emptyset$ if $|i-j|>1$.  If $n$ is the number of components, 
we say that the chain has \emph{length $n$}. Two chains of 
the same length are isomorphic. The
components $E_1$ and $E_n$ are called the \emph{extreme curves} of the 
chain. A connected subcurve of a chain is also a chain, and is called a 
\emph{subchain}.

Let $\N$ be a collection of nodes of a curve $X$, 
and $\eta\:\N\to\mathbb N$ a function. Denote by $\wt X_\N$ the partial 
normalization of $X$ along $\N$. For each $P\in\N$, let $E_P$ be a chain of 
rational curves of length $\eta(P)$. Let $X_\eta$ denote the curve 
obtained as the union of $\widetilde X_\N$ and the $E_P$ for $P\in\N$ in the 
following way: Each chain $E_P$ intersects no other chain, but intersects 
$\wt X_\N$ transversally at two points, the branches over $P$ on $\wt X_\N$ 
on one hand, and nonsingular points on each of the two extreme curves of $E_P$  
on the other hand. There is a natural map 
$\mu_\eta\:X_\eta\to X$ collapsing each 
chain $E_P$ to a point, whose restriction to $\wt X_\N$ is the partial 
normalization map. The curve $X_\eta$ and the map $\mu_\eta$ are well-defined 
up to $X$-isomorphism.

All schemes are assumed locally Noetherian. 
A point $s$ of a scheme $S$ is a map $\text{Spec}(K)\to S$, where $K$ is a 
field, denoted $\kappa(s)$. If $\kappa(s)$ is algebraically closed, we say that $s$ is 
geometric. 

A \emph{family of} (\emph{connected}) \emph{curves} 
is a proper and flat morphism 
$f\colon X\rightarrow S$ whose geometric fibers are
connected curves. If $s$ is a geometric point of $S$, put
$X_s:=f^{-1}(s)$. If $T$ is a $S$-scheme, put $X_T:=X\times_S T$; the
second projection $X_T\to T$ is also a family of curves.

If all the geometric fibers of $f$ are 
(semistable, quasistable, stable) curves (of genus $g$), 
we will say that 
$f$ or $X/S$ is a 
\emph{family of} 
(\emph{semistable, quasistable, stable}) 
\emph{curves} 
(\emph{of genus $g$}). 

If $X$ is a curve over an algebraically closed field $K$, a 
\emph{regular smoothing} of $X$ is the data $(f,\xi)$ consisting 
of a generically smooth family of curves $f\:Y\to S$, where $Y$ is regular
and $S$ is affine with ring of functions 
$K[[t]]$, the ring of formal power series over $K$, and an 
isomorphism $\xi\:X\to Y_0$, where $Y_0$ is the special fiber of
$f$. A \emph{twister}  of $X$ is an invertible sheaf on $X$ of the
form  $\xi^*\O_Y(Z)|_{Y_0}$, where $(f\:Y\to S,\xi)$ 
is a regular smoothing of $X$, and $Z$ is a Cartier divisor of $Y$ 
supported in $Y_0$, so a formal sum of 
components of $Y_0$. 
 A twister has degree 0 by continuity of the 
degree, since $\O_Y(Z)$ is trivial away from $Y_0$. 

If $Z$ is a formal
sum of the components of $X$, we define 
$$
\O_X(Z):=\xi^*\O_Y(\xi(Z))|_{Y_0}.
$$
This definition depends on the
choices of $f$ and $\xi$. However, for our purposes here, the definition is
good enough as it is.

\subsection{Sheaves}\label{sect2.2}

Let $f\col X\ra S$ be a family of curves.
Given a coherent sheaf $\Fcal$ on $X$ and a geometric 
point $s$ of $S$, we will 
let $\Fcal_s:=\Fcal|_{X_s}$.
More generally, given any $S$-scheme $T$,
denote by $\Fcal_T$ the
pullback of $\Fcal$ to $X_T$
under the first projection $X_T\to X$.

 Let $\I$ be a
$S$-flat coherent sheaf on $X$. We say that
$\I$ is \emph{torsion-free} on $X/S$ if,
for each geometric point $s$ of $S$, the
associated points of $\I_s$
are generic points of $X_s$. We say that
$\I$ is \emph{of rank $1$} or \emph{rank-$1$} on $X/S$
if, for each geometric point $s$ of $S$, the
sheaf $\I_s$ is invertible
on a dense open subset of $X_s$. 
We say that $\I$ is \emph{simple} on $X/S$ if, for each geometric point $s$ of $S$, we have 
$\text{Hom}(\I_s,\I_s)=\kappa(s)$.

Since $X$ is flat over $S$, with reduced and connected fibers,
each invertible sheaf on $X$ is torsion-free, rank-1 and simple
on $X/S$. In particular, so is the relative dualizing sheaf 
of $X/S$.

We say that $\I$ has degree $d$ on $X/S$
if $\I_s$ has degree $d$ for each $s\in S$, that is,
$$
d=\chi(\I_s)-\chi(\O_{X_s})
$$
for each $s\in S$.

Given a geometric point $s$ of $S$ and a subcurve $Y$ of $X_s$, 
let $\I_Y$ denote the restriction of
$\I$ to $Y$ modulo torsion. In other words,
if $\xi_1,\dots,\xi_m$ are the generic points of $Y$,
let $\I_Y$ denote the image of the natural map
$$
\I|_Y\lra\bigoplus_{i=1}^m  (\I|_Y)_{\xi_i}.
$$
Also, let $\deg_Y(\I)$ denote the degree of
$\I_Y$, i.e.
$$
\deg_Y(\I):=\chi(\I_Y)-\chi(\O_Y).
$$

Let $X$ be a (connected) curve over an algebraically closed field $K$
and 
denote by $X_1,\ldots,X_p$ its 
components.
Fix an integer $d$. 
Since $X$ is a proper scheme over $K$, by \cite{BLR}, Thm.~8.2.3,
p.~211, there is a scheme, locally of finite type over $K$, parameterizing 
degree-$d$ invertible sheaves on $X$; denote it by $J_X^d$. It decomposes as 
\begin{equation}\label{Jacobdecomp}
J^d_X=\underset{ d_1+\ldots+d_p=d}{\underset{\ul{d}=(d_1,\dots,d_p)}
{\coprod}}J^{\ul{d}}_X,
\end{equation}
where $J^{\ul{d}}_X$ is the connected component of $J_X^d$ parameterizing invertible sheaves $\L$ such that 
$\deg(\L|_{X_i})=d_i$  for $i=1,\dots,p$. The $J^{\ul{d}}_X$ are 
quasiprojective varieties. 

The scheme $J_X^d$ is in a natural way an open subscheme of $\J_X^d$, 
the scheme over $K$ 
parameterizing torsion-free, rank-1, simple sheaves of degree $d$ 
on $X$; see \cite{E01} for the construction of $\J_X^d$ and its properties. 
 The scheme $\J_X^d$ is universally closed over $K$ 
but, in general, not separated and only locally of finite type. Moreover, in 
contrast to $J_X^d$, the scheme $\J_X^d$ is connected, hence not
easily decomposable. Thus, to deal with a 
manageable piece of it, we resort to polarizations. 

Let $\E$ be a locally free sheaf on $X$ of constant rank, and 
$\I$ a torsion-free, rank-1 sheaf on $X$. 
We say that $\I$ 
is \emph{semistable} (resp.~\emph{stable}, resp.~\emph{$X_i$-quasistable}) 
\emph{with respect to $\E$} if
\begin{enumerate}
\item $\chi(\I\ox \E)=0$,
\item $\chi(\I_Y\ox \E|_Y)\geq 0$ for each proper subcurve 
$Y\subset X$ (resp.~with equality never, resp. 
with equality only if $X_i\not\subseteq Y$).
\end{enumerate}
Notice that it is enough to 
check Property 2 above for connected subcurves $Y$. Also, Property 1 is equivalent to the numerical condition that
$$
\rk(\E)(\deg(\I)+\chi(\O_X))+\deg(\E)=0.
$$

The $X_i$-quasistable sheaves are simple, what can be easily proved using 
for instance \cite{E01}, Prop.~1, p.~3049. 
Their importance is that they form 
an open subscheme $\J^{\E,i}_X$ of $\J^d_X$ that is projective over $K$. 

Let $f\:X\to S$ be a family of curves. 
Let $\E$ be a locally free sheaf on $X$ of constant rank and 
$\I$ a torsion-free, rank-1 sheaf on $X/S$. Let 
$\sigma\:S\to X$ 
be a section of $f$ through its smooth locus. 
We say that $\I$ 
is \emph{semistable} (resp.~\emph{stable}, resp.~\emph{$\sigma$-quasistable}) 
\emph{with respect to $\E$} if, for each 
geometric point $s$ of $S$, the sheaf $\I_s$ is semistable
(resp.~stable, resp.~$X_{s,\sigma}$-quasistable) with respect to
$\E_s$. Here, $X_{s,\sigma}$ is the component of $X_s$ containing 
$\sigma(s)$.

There is an algebraic space $\ol J_{X/S}$ parameterizing 
torsion-free, rank-1, simple sheaves on $X/S$, containing the locus 
$J_{X/S}$ parameterizing invertible sheaves as an open
subset. Remarkable facts are that, first, up to an \'etale base
change, $\ol J_{X/S}$ is a scheme; second, the locus of $\ol J_{X/S}$
parameterizing the sheaves on $X/S$ which are $\sigma$-quasistable 
with respect to $\E$ is an open subspace which is proper over $S$.
 
\subsection{Chains of rational curves} 

If $E$ is a chain of rational curves and $\L$ is an invertible sheaf on 
$E$, then $\L$ is determined by its restrictions to the components of $E$, and thus by the degrees of these restrictions. In particular, 
$\L\cong\O_E$ if and only if $\deg(\L|_F)=0$ for each component $F\subseteq E$. Also, $\L$ is the dualizing sheaf of $E$ if 
its degree on each component is zero, but for the extreme curves, where the degree is $-1$.

\begin{Lem}\label{chainh1h0} Let $E$ be a chain of rational curves of length 
$n$. Let $E_1$ and $E_n$ denote the extreme curves. 
Let $\L$ be an invertible sheaf on $E$.
Then the following statements hold:
\begin{enumerate}
\item $\deg(\L|_F)\geq -1$ for every subchain $F\subseteq E$ if and
  only if $h^1(E,\L)=0$.
\item $\deg(\L|_F)\leq 1$ for every subchain $F\subseteq E$ if and
  only if
$$
h^0(E,\L(-P-Q))=0
$$
for any two points $P\in E_1$ and $Q\in E_n$ on the nonsingular locus of $E$.
\end{enumerate}
\end{Lem}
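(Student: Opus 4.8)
The plan is to derive the second statement from the first by Serre duality, and to prove the first by induction on the length $n$. For the reduction, fix smooth points $P\in E_1$ and $Q\in E_n$ and set $\M:=\omega_E\ox\L^{-1}(P+Q)$. Since $\omega_E$ has degree $-1$ on each extreme curve and $0$ on every internal component, and $\O_E(P+Q)$ has degree $1$ on each of $E_1$, $E_n$ and $0$ elsewhere, a one-line computation gives $\deg(\M|_{E_i})=-\deg(\L|_{E_i})$ for every component, hence $\deg(\M|_F)=-\deg(\L|_F)$ for every subchain $F$. As $E$ is Gorenstein, Serre duality gives $h^0(E,\L(-P-Q))=h^1(E,\M)$. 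Thus the first statement applied to $\M$ says that $h^0(E,\L(-P-Q))=0$ if and only if $\deg(\M|_F)\geq -1$, i.e.\ $\deg(\L|_F)\leq 1$, for every subchain $F$; this is precisely the second statement, so only the first remains.

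For the ``only if'' direction of the first statement I argue by contraposition. If some subchain $F$ has $\deg(\L|_F)\leq -2$, then, since a chain has arithmetic genus $0$, we get $\chi(\L|_F)=\deg(\L|_F)+1\leq -1$, so $h^1(F,\L|_F)\geq -\chi(\L|_F)>0$. The exact sequence $0\to\I_F\ox\L\to\L\to\L|_F\to 0$, with $\I_F$ the ideal sheaf of $F$ in $E$, together with the vanishing of $H^2$ on a curve, produces a surjection $H^1(E,\L)\twoheadrightarrow H^1(F,\L|_F)$, whence $h^1(E,\L)>0$.

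For the ``if'' direction I induct on $n$, the base case $n=1$ being the classical computation on $\IP^1$. For $n\geq 2$ write $E=E_1\cup E''$ with $E''=E_2\cup\cdots\cup E_n$ meeting $E_1$ at the node $P_1$, and use the normalization sequence $0\to\L\to\L|_{E_1}\oplus\L|_{E''}\to\L|_{P_1}\to 0$. Every subchain of $E_1$ or of $E''$ is a subchain of $E$, so the hypothesis and the inductive hypothesis give $h^1(E_1,\L|_{E_1})=h^1(E'',\L|_{E''})=0$; the long exact sequence then identifies $H^1(E,\L)$ with the cokernel of the evaluation-difference map $\alpha\col H^0(\L|_{E_1})\oplus H^0(\L|_{E''})\to\L|_{P_1}$, so it suffices to prove that $\alpha$ is surjective.

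This surjectivity is the main obstacle, since the subchain hypothesis on $E$ is strictly stronger than the hypotheses on the two pieces and must be spent precisely here. If $d_1\geq 0$ then $\L|_{E_1}=\O(d_1)$ has a section not vanishing at $P_1$ and $\alpha$ is onto. The delicate case is $d_1=-1$, where $\L|_{E_1}$ has no sections; there I show instead that the evaluation $H^0(E'',\L|_{E''})\to\L|_{P_1}$ is surjective, which reduces to $h^1(E'',\L|_{E''}(-P_1))=0$. The key point is that for every subchain $G\subseteq E''$ containing $E_2$ the union $E_1\cup G$ is a subchain of $E$, so $-1+\deg(\L|_G)\geq -1$ forces $\deg(\L|_G)\geq 0$; hence $\L|_{E''}(-P_1)$ still has all subchain degrees $\geq -1$, and the inductive hypothesis yields the required vanishing. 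This closes the induction.
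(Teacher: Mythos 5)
Your proposal is correct, and both directions of Statement 1 are handled by routes genuinely different from the paper's. For the implication $h^1(E,\L)=0\Rightarrow\deg(\L|_F)\geq-1$, you argue by contraposition: a subchain $F$ with $\deg(\L|_F)\leq-2$ has $\chi(\L|_F)\leq-1$, and the surjection $H^1(E,\L)\twoheadrightarrow H^1(F,\L|_F)$ (from the restriction sequence plus vanishing of $H^2$ on a curve) forces $h^1(E,\L)>0$. The paper instead runs an induction and pins down the extremal case $\deg(\L)=-2$ by recognizing $\L$ as the dualizing sheaf; your argument is shorter and avoids induction here. For the converse, the paper splits the chain adaptively: off $E_1$ if $\deg(\L|_{E_1})\geq0$, off $E_n$ by symmetry, and otherwise at an interior component of degree $\geq1$ (which must exist since $\deg(\L)\geq-1$), so that the relevant $h^1$ on the split-off piece vanishes for free. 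You instead always split off $E_1$ via the normalization sequence and confront the surjectivity of the evaluation map at the attaching node; in the delicate case $\deg(\L|_{E_1})=-1$ you twist down by the node on $E''$ and verify, using that $E_1\cup G$ is a subchain of $E$ for every subchain $G\subseteq E''$ containing $E_2$, that the inductive hypothesis still applies. This correctly isolates the one place where the subchain hypothesis (rather than the componentwise one) is actually spent, at the cost of a slightly more involved case analysis than the paper's. The deduction of Statement 2 by Serre duality, rewriting $h^0(E,\L(-P-Q))$ as $h^1$ of a sheaf whose multidegree is the negative of that of $\L$, matches the paper's.
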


\begin{proof}  Let $E_1,\dots,E_n$ be the components of $E$, 
ordered in such a way that $\# E_i\cap E_{i+1}=1$ for $i=1,\dots,n-1$. 
We prove the statements by induction on $n$. If $n=1$ all the 
statements follow from the knowledge of the cohomology of 
the sheaves $\O_{\IP^1_K}(j)$. 

Suppose $n>1$. We show Statement 1. Assume that $\deg(\L|_F)\geq -1$ for every 
subchain $F\subseteq E$. Consider the natural exact sequence
$$
0\to\L|_{E_1}(-N)\to\L\to\L|_{E'_1}\to0,
$$
where $E'_1:=\ol{E-E_1}$ and $N$ is the unique point of $E_1\cap E'_1$. 
By induction, $h^1(E'_1,\L|_{E'_1})=0$. If $\deg\L|_{E_1}\geq 0$ then 
$h^1(E_1,\L|_{E_1}(-N))=0$ as well, and hence
$h^1(E,\L)=0$ from the long exact sequence in cohomology.

Suppose now that $\deg\L|_{E_1}<0$. If $\deg\L|_{E_n}\geq 0$, we 
invert the ordering of the 
chain, and proceed as above. Thus we may suppose $\deg\L|_{E_n}<0$ as well. 
Since $\deg\L|_E\geq -1$, there is $i\in\{2,\dots,n-1\}$ such that
$\deg\L|_{E_i}\geq 1$. Let $F_1:=E_1\cup\cdots\cup E_{i-1}$ and 
$F_2:=E_{i+1}\cup\cdots\cup E_n$.
Consider the natural exact sequence
$$
0\to\L|_{E_i}(-N_1-N_2)\to\L\to\L|_{F_1}\oplus\L|_{F_2}\to0,
$$
where $N_1$ and $N_2$ are the two points of intersection of $E_i$ with 
$E'_i:=\ol{E-E_i}$. By induction,
$h^1(F_1,\L|_{F_1})=h^1(F_2,\L|_{F_2})=0$. Also, since 
$\deg \L|_{E_i}\geq 1$, we have $h^1(E_i,\L|_{E_i}(-N_1-N_2))=0$, and thus 
it follows from the long
exact sequence in cohomology that $h^1(E,\L)=0$ as well.

Assume now that $h^1(E,\L)=0$. Then $h^1(F,\L|_F)=0$ for every subchain
$F\subseteq E$. By induction, $\deg(\L|_F)\geq -1$ for every proper
subchain $F\subsetneq E$. Since $E$ is the union of two proper
subchains, it follows that $\deg(\L)\geq -2$. Assume by contradiction
that $\deg(\L)=-2$. Then $\deg(\L|_F)=-1$ for every proper subchain 
$F\subsetneq E$ containing $E_1$ or $E_n$. It follows that 
$$
\deg(\L|_{E_i})=\begin{cases}0&\text{if $1<i<n$},\\
-1&\text{otherwise.}\end{cases}
$$
But then $\L$ is the dualizing sheaf of $E$, and thus $h^1(E,\L)=1$,
reaching a contradiction. The proof of Statement 1 is complete.

Statement 2 is proved in a similar way. Alternatively, it is enough 
to observe that $\O_E(-P-Q)$ is the dualizing sheaf of $E$, 
and thus, by Serre Duality,
$$
h^0(E,\L(-P-Q))=h^1(E,\L^{-1}).
$$
So Statement 2 follows from 1.
\end{proof}

\section{Admissibility}\label{3}

Let $f\col X\ra S$ be a family of curves. 
Let $\psi\:Y\to X$ be a proper 
morphism such that the composition $f\psi$ is another family 
of curves. We say that $\psi$ is a \emph{semistable modification} of $f$ 
if for each geometric point $s$ of $S$ there are 
a collection of nodes $\N_s$ 
of $X_s$ and a map
$\eta_s\:\N_s\to\mathbb N$ such that the induced map 
$\psi_s\col Y_s\to X_s$ 
is 
$X_s$-isomorphic to $\mu_{\eta_s}\:(X_s)_{\eta_s}\to X_s$. If $\eta_s$
is constant and equal to 1 for every $s$, we say that $\psi$ is a 
\emph{small semistable modification} of $f$. 
\marginpar{\bf New definition}

Assume $\psi$ is a semistable modification of $f$. Let $\L$ be an 
invertible sheaf on $Y$.  We say that $\L$ is 
$\psi$-\emph{admissible} (resp.~\emph{negatively $\psi$-admissible}, 
resp.~\emph{positively $\psi$-admissible}, 
resp.~$\psi$-\emph{invertible}) at a given geometric point $s$ of $S$ 
if the restriction of $\L$ to every chain of rational curves of $Y_s$
over a node of $X_s$ has degree $-1$, $0$ or $1$ 
(resp.~$-1$ or $0$, resp.~$0$ or $1$, resp.~$0$). We say that $\L$ is 
$\psi$-\emph{admissible} (resp.~\emph{negatively $\psi$-admissible}, 
resp.~\emph{positively $\psi$-admissible}, 
resp.~$\psi$-\emph{invertible}) if $\L$ is so at every $s$. 
Notice that, if $\L$ is negatively (resp.~positively) 
$\psi$-admissible, for every 
chain of rational curves of $Y_s$ over a node of $X_s$, 
the degree of $\L$ on each component of the chain is 0 but 
for at most one component where the degree is $-1$ (resp.~$1$).

\begin{Thm}\label{famchain}
Let $f\:X\to S$ be a family of curves and $\psi\:Y\to X$ a 
semistable modification of $f$. 
Let $\L$ be an invertible sheaf on $Y$ of relative degree $d$ over $S$. 
Then the following statements hold:
\begin{enumerate}
\item The points $s$ of $S$ at which $\L$ is
  $\psi$-admissible (resp.~negatively $\psi$-admissible,
  resp.~positively $\psi$-admissible, 
resp.~$\psi$-invertible) form an open subset of $S$.
\item
$\L$ is $\psi$-admissible if and only if 
$\psi_*\L$ is a torsion-free, rank-$1$ sheaf on $X/S$ of 
relative degree $d$, whose formation commutes with base change. In
this case, $R^1\psi_*\L=0$.
\item If $\L$ is $\psi$-admissible then the evaluation
map $v\:\psi^*\psi_*\L\to\L$ is surjective if and only if $\L$ is 
positively $\psi$-admissible. Furthermore, $v$ is bijective if and
only if $\L$ is $\psi$-invertible, if and only if $\psi_*\L$ is invertible.
\end{enumerate}
\end{Thm}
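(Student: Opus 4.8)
The plan is to reduce all three statements to a local analysis at the nodes of the fibers, where the chains live, feeding in the cohomological dictionary of Lemma~\ref{chainh1h0}. Fix a geometric point $s$ and a node $P$ of $X_s$ with its chain $E=E_P\subseteq Y_s$, extreme curves $E_1,E_n$, and let $P',Q'$ be the two branch points at which $E$ meets the partial normalization $\wt X$; these are smooth points of the chain lying on $E_1$ and $E_n$. The whole analysis is \'etale-local on $X$ around the images of the nodes, since away from these images $\psi$ is an isomorphism and every statement holds there trivially. Over such a neighborhood, $(\psi_*\L)_P$ is the module of sections of $\L$ over $\psi^{-1}$ of the neighborhood, i.e.\ over $E$ together with the two half-branches attached at $P'$ and $Q'$. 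Because $\O_E(-P'-Q')$ is the dualizing sheaf of $E$, the two parts of Lemma~\ref{chainh1h0} read $h^1(E,\L|_E)=0\Leftrightarrow\deg(\L|_F)\ge -1$ for all subchains $F$, and $h^0(E,\L|_E(-P'-Q'))=0\Leftrightarrow\deg(\L|_F)\le 1$ for all subchains $F$.

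For Statement~2 I would first treat the fiberwise content. The lower bound $\deg(\L|_F)\ge -1$, i.e.\ $h^1(E,\L|_E)=0$, is the vanishing that makes the contraction of $E$ cohomologically trivial: it gives $R^1\psi_{s*}\L_s=0$ via the theorem on formal functions and, being the vanishing of the obstruction to lifting and gluing sections across $P$, leaves $\psi_{s*}\L_s$ torsion-free at $P$. The upper bound $\deg(\L|_F)\le 1$, i.e.\ $h^0(E,\L|_E(-P'-Q'))=0$, is exactly what prevents the stalk from acquiring rank $\ge 2$, since a nonzero section of $\L|_E$ vanishing at both branch points would furnish a second independent generator of $(\psi_*\L)_P$ over $\O_{X_s,P}$. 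Thus admissibility on every chain is equivalent, fiber by fiber, to $\psi_{s*}\L_s$ being torsion-free rank-$1$ with $R^1\psi_{s*}\L_s=0$. Granting the $R^1$-vanishing, the degree drops out of Euler characteristics: from $\psi_*\O_Y=\O_X$ and $R^1\psi_*\O_Y=0$ one gets $\chi(\O_{Y_s})=\chi(\O_{X_s})$, whence $\deg(\psi_{s*}\L_s)=\chi(\psi_{s*}\L_s)-\chi(\O_{X_s})=\chi(\L_s)-\chi(\O_{Y_s})=d$. The converse direction is the contrapositive of the fiberwise equivalence: a subchain of degree $\ge 2$ produces an extra generator by Lemma~\ref{chainh1h0}(2), destroying rank-$1$, while a subchain of degree $\le -2$ produces nonzero $h^1$ by Lemma~\ref{chainh1h0}(1), hence torsion. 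The remaining, and I expect hardest, point is the globalization: upgrading these fiberwise facts to $R^1\psi_*\L=0$ on the nose and to the assertion that the formation of $\psi_*\L$ commutes with base change. Since $\psi$ is not flat over $X$, one cannot invoke the usual cohomology-and-base-change for the structure map directly; instead I would exploit the $S$-flatness of $\L$ and the theorem on formal functions along the sections cutting out the nodes, propagating the fiberwise vanishing $h^1(E,\L|_E)=0$ to the infinitesimal neighborhoods.

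Statement~1 then follows from the cohomological reformulation. Each of the four conditions is an intersection of open conditions of the shape $h^1=0$ or $h^0=0$ for $\L|_E$, or for a twist of it: the negative and positive variants amount to admissibility together with $\deg(\L|_F)\le 0$, respectively $\deg(\L|_F)\ge 0$, on all subchains, and these sharper bounds are again cut out by the vanishing of a suitable $h^0$ or $h^1$ on the chains. Each such vanishing defines an open locus in $S$ by the semicontinuity theorem applied to the cohomology of the fibers of $\psi$, and here the marked points $P',Q'$ are the branch sections of the nodes, so the twists make sense in families. Hence all four admissible loci are open.

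For Statement~3 I would restrict the evaluation map $v\:\psi^*\psi_*\L\to\L$ to a chain $E$. Since $\psi$ contracts $E$ to $P$, one has $(\psi^*\psi_*\L)|_E\cong V\otimes_K\O_E$, where $V$ is the fiber of $\psi_*\L$ at $P$, and $v|_E$ is the evaluation of those sections of $\L$ that extend over the two branches. Surjectivity of $v$ is automatic off the chains, and on $E$ it holds exactly when $\L|_E$ is globally generated by such sections, which by the degree dictionary means $\deg(\L|_F)\ge 0$ on every subchain; combined with admissibility this is positive admissibility. Bijectivity of $v|_E$ forces the map $V\otimes_K\O_E\to\L|_E$ to be an isomorphism, hence $\dim_K V=1$ and $\L|_E\cong\O_E$, which is $\psi$-invertibility on $E$. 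Thus $v$ is bijective if and only if it is bijective on every chain, i.e.\ if and only if $\L$ is $\psi$-invertible, and this is equivalent, by the local computation of Statement~2, to $\psi_*\L$ being invertible. Assembling the chains yields the three equivalences.
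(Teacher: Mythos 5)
Your fiberwise dictionary is the paper's: Lemma~\ref{chainh1h0} translating admissibility into $h^1(E,\L|_E)=0$ and $h^0(E,\L|_E(-P'-Q'))=0$, the resulting torsion-free/rank-1/degree analysis for Statement~2 over a point, and the chain-by-chain study of the evaluation map for Statement~3 are all essentially right. But the two places you yourself flag as delicate are genuine gaps, and the missing idea is the same in both. For Statement~1 you want to apply semicontinuity ``to the cohomology of the fibers of $\psi$.'' This fails as stated: the chains $E_P\subseteq Y_s$ over the nodes of $X_s$ do not form a flat proper family over any base to which semicontinuity applies --- the set of nodes $\N_s$ and the lengths of the chains vary with $s$, nodes can smooth out, and in particular the branch points $P',Q'$ are not induced by sections of $X/S$, so the twist $\L|_E(-P'-Q')$ does not ``make sense in families.'' The paper's device is to fix $\A$ relatively ample on $X$ and twist by $\psi^*\A^{\otimes m}$ for $m\gg0$: since this pullback is trivial on the contracted chains, $h^1(Y_s,\L_s\otimes\psi^*\A^{\otimes m}_s)=\sum_F h^1(F,\L|_F)$, so the chain-level condition $\deg(\L|_F)\geq-1$ becomes an $h^1$-vanishing on the whole fiber $Y_s$ of the family $f\psi\:Y\to S$, which is open by ordinary semicontinuity. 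The four variants are then obtained not by twisting by branch points but by replacing $\L$ with $\L^{\otimes n}$ (the upper bound $\deg(\L|_F)\leq 0$ is $\deg(\L^{\otimes-2}|_F)\geq-1$, etc.), so each locus is an intersection of two such open sets.

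The same mechanism is what you are missing for the globalization in Statement~2, which you defer to ``the theorem on formal functions along the sections cutting out the nodes'': again the nodes are not cut out by sections, and formal functions addresses completions along fibers of $\psi$, not $S$-flatness of $\psi_*\L$ or base change over $S$. In the paper, the vanishing $h^1(Y_s,\L_s\otimes\psi^*\A_s^{\otimes m})=0$ for all $s$ and $m\gg0$ makes $(f\psi)_*(\L\otimes\psi^*\A^{\otimes m})=f_*(\psi_*\L\otimes\A^{\otimes m})$ locally free with formation commuting with base change, and by relative ampleness of $\A$ this yields both the $S$-flatness of $\psi_*\L$ and the base-change isomorphism $\lambda_X^*\psi_*\L\to\psi_{T*}\lambda_Y^*\L$. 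Only the vanishing $R^1\psi_*\L=0$ follows directly from the fiberwise statement, because the fibers of $\psi$ have dimension at most one, so the top higher direct image commutes with base change. Your fiberwise arguments can stand, but the family-level parts need to be rebuilt around this ample-twist argument (or an equivalent substitute).
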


\begin{proof} All of the statements and hypotheses 
are local with respect to the \'etale topology of $S$. So we may
assume $S$ is Noetherian and that there is an 
invertible sheaf $\A$ on $X$ that is relatively ample over $S$.  Let 
$\widehat{\A}:=\psi^*\A$. 

We prove Statement 1 first. 
For each geometric point $s$ of $S$, let $E_s$ be the subcurve of
$Y_s$ which is the union of all the components contracted by $\psi_s$,
and let $\wt X_s$ be the partial
normalization of $X_s$ obtained as the union of the remaining
components. Since $\psi|_{\widetilde X_s}\:\widetilde X_s\to X_s$ is a 
finite map, it follows that $\widehat\A|_{\wt X_s}$ is ample, and thus 
$$
h^1(\wt X_s,(\L\ox\widehat{\A}^{\ox m_s})|_{\wt X_s}(-\textstyle\sum P_i))=0
$$
for every large enough integer $m_s$, where the sum runs over all the branch points of 
$\wt  X_s$ above $X_s$.  Since $S$ is Noetherian, a large enough
integer works for all $s$, that is, for every $m>>0$,
\begin{equation}\label{hm>0}
h^1(\wt X_s,(\L\ox\widehat{\A}^{\ox m})|_{\wt X_s}
(-\textstyle\sum
P_i))=0\quad\text{for each geometric point $s$ of $S$}.
\end{equation}

Now, for each integer $m$ consider the natural exact sequence
\begin{equation}\label{exseq}
0\lra(\L\ox\widehat{\A}^{\ox m})|_{\wt X_s}(-\textstyle\sum P_i)\lra
\L_s\ox\widehat{\A}_s^{\ox m}\lra(\L\ox\widehat{\A}^{\ox m})|_{E_s}\lra 0
\end{equation}
and its associated long exact sequence in cohomology. If $m$ is large enough that \eqref{hm>0}
holds, then
\begin{equation}\label{h1E00}
h^1(Y_s,\L_s\ox\widehat{\A}_s^{\ox m})=
h^1(E_s,\L\ox\widehat{\A}^{\ox m}|_{E_s}).
\end{equation}
On the other hand, since $\widehat{\A}$ is a pullback from $X$, it follows
that 
\begin{equation}\label{h1E0}
h^1(E_s,\L\ox\widehat{\A}^{\ox m}|_{E_s}) =
\textstyle\sum_F h^1(F,\L|_F)\quad
\text{for every integer $m$},
\end{equation}
where the sum runs over all the maximal chains $F$ of rational curves on
$Y_s$ contracted by $\psi_s$. Putting together \eqref{h1E00} and
\eqref{h1E0}, it follows now from Lemma~\ref{chainh1h0}
that 
\begin{equation}\label{adm=0}
h^1(Y_s,\L_s\ox\widehat{\A}_s^{\ox m})=0
\end{equation}
if and only if $\deg(\L|_F)\geq -1$ for every chain $F$ of
rational curves on $Y_s$ contracted by $\psi_s$. This is the case if
$\L$ is $\psi$-admissible at $s$.

It follows from semicontinuity of cohomology that the geometric points
$s$ of $S$ such that $\L_s$ has degree at least $-1$ on every chain
of rational curves of $Y_s$ contracted by $\psi_s$ 
form an open subset $S_1$ of $S$. Likewise, for
each integer $n$, the geometric points $s$ of $S$ such that
$\L^{\otimes n}_s$ has degree at least $-1$ on every chain of
rational curves of $Y_s$ contracted by $\psi_s$ form an open subset
$S_n$ of $S$. Then $S_1\cap S_{-1}$ parameterizes those $s$ for which
$\L_s$ is $\psi_s$-admissible, $S_1\cap S_{-2}$ parameterizes
those $s$ for which $\L_s$ is negatively $\psi_s$-admissible, 
$S_2\cap S_{-1}$ parameterizes
those $s$ for which $\L_s$ is positively $\psi_s$-admissible, 
and
$S_2\cap S_{-2}$ parameterizes those $s$ for which $\L_s$ is
$\psi_s$-invertible.

We prove Statement 2 now. 
Assume for the moment that $\L$ is $\psi$-admissible. 
To show that $\psi_*\L$ is flat over $S$, 
we need only show that $f_*(\psi_*\L\ox\A^{\ox m})$ is locally free for
each $m>>0$. By the projection formula, we need only show that
$g_*(\L\ox\widehat{\A}^{\ox m})$ is locally free for each $m>>0$,
where $g:=f\psi$. 
This follows from what we have already proved: For each large enough
integer $m$ such that \eqref{hm>0} holds, also 
\eqref{adm=0} holds for each geometric 
point $s$ of $S$, because $\L$ is $\psi$-admissible.

Furthermore, taking the long exact sequence in higher direct images of $\psi_s$ for 
the exact sequence \eqref{exseq} with $m=0$, using \eqref{h1E0} and 
that $\psi_s|_{\wt X_s}\:\wt X_s\to X_s$ is a finite map, it follows 
that $R^1\psi_{s*}(\L_s)=0$ for every geometric point $s$ of $S$. 
Since the fibers of $\psi$ have at most 
dimension 1, the formation of $R^1\psi_*(\L)$ commutes with base change, and 
thus $R^1\psi_*(\L)=0$.

Another consequence of \eqref{adm=0} holding for each geometric point
$s$ of $S$ is that the formation of 
$g_*(\L\ox\widehat{\A}^{\ox m})$ commutes with base
change for $m>>0$. 
We claim now that the base-change map 
$\lambda_X^*\psi_*\L\to\psi_{T*}\lambda_Y^*\L$ is an
isomorphism for each Cartesian diagram of maps
$$
\begin{CD}
Y_T @>\lambda_Y >> Y\\
@V\psi_TVV @V\psi VV\\
X_T @>\lambda_X >> X\\
@Vf_TVV @Vf VV\\
T @>\lambda >> S.
\end{CD}
$$
Indeed, since $\A$ is relatively ample over $S$, 
it is enough to check that the induced map
\begin{equation}\label{bch}
f_{T*}(\lambda_X^*\psi_*\L\ox\lambda_X^*\A^{\ox m})\lra
f_{T*}(\psi_{T*}\lambda_Y^*\L\ox\lambda_X^*\A^{\ox m})
\end{equation}
is an isomorphism for $m>>0$. But, by the projection formula,
the right-hand side is simply 
$f_{T*}\psi_{T*}\lambda_Y^*(\L\ox\wh\A^{\ox m})$.
Also, since $\psi_*\L$ is $S$-flat, 
the left-hand side is $\lambda^*f_*(\psi_*(\L)\ox\A^{\ox m})$ for $m>>0$, 
whence equal to 
$\lambda^*f_*\psi_*(\L\ox\widehat{\A}^{\ox m})$ by the projection formula. 
So, since the formation of $g_*(\L\ox\widehat{\A}^{\ox m})$ 
commutes with base
change for $m>>0$, 
it follows that \eqref{bch} is an isomorphism for $m>>0$, as asserted.

To prove the remainder of Statement 2 and Statement 3 we 
may now assume that $S$ is a geometric point. For Statement 2, 
we need only show now that $\psi_*\L$ is a torsion-free, rank-1
sheaf of degree $d$ on $X$ if and only if $\L$ is $\psi$-admissible. 
Let $F_1,\dots,F_e$ be the maximal chains of rational curves
of $Y$ contracted by $\psi$, to $P_1,\dots,P_e\in X$. Let $E$ be the
union of the $F_i$ and $\wt X$ the union of the remaining
components. For each $i=1,\dots,e$, let $P_{i,1},P_{i,2}\in Y$ be the
points of intersection between $F_i$ and $\wt X$.  
Taking higher direct images under $\psi$ in the 
natural exact sequences
\begin{equation}\label{exx}
\begin{aligned}
&0\to\L|_{\wt X}(-\textstyle\sum P_{i,j})\to\L\to\L|_E\to0,\\
&0\to\L|_E(-\textstyle\sum P_{i,j})\to\L\to\L|_{\wt X}\to0,
\end{aligned}
\end{equation} 
and using that $\psi|_{\wt X}$ is a finite map, we get 
\begin{equation}\label{RRR}
R^1\psi_*\L=R^1\psi_*\L|_E
\end{equation} 
and the exact sequence
$$
0\to\psi_*\L|_E(-\textstyle\sum P_{i,j})\to\psi_*\L\to\psi_*\L|_{\wt X}\to
R^1\psi_*\L|_E(-\textstyle\sum P_{i,j})\to R^1\psi_*\L\to 0 .
$$
Since $\psi|_{\widetilde X}$ is also birational, $\psi_*\L|_{\widetilde  X}$ is a torsion-free, 
rank-1 sheaf of degree $\deg\L|_{\widetilde  X}+e$. Since 
$\psi_*\L|_E(-\textstyle\sum P_i)$ is supported at finitely many
points, it follows that $\psi_*\L$ is
torsion-free if and only if $h^0(E,\L|_E(-\sum P_i)=0$. The latter holds if
and only if the degree of $\L$ on each chain of rational curves in $E$
is at most $1$, by Lemma \ref{chainh1h0}. Furthermore, if it
holds, then $R^1\psi_*\L|_E(-\textstyle\sum P_i)$ has length
$1-\deg\L|_{F_i}$ at each $P_i$ by the Riemann--Roch Theorem. Since 
$\deg\L|_{\wt  X}+\deg\L|_E=d$, it follows that $\deg\psi_*\L=d$ if
and only if $R^1\psi_*\L=0$. By \eqref{RRR}, the latter holds if and
only if $h^1(E,\L|_E)=0$, thus if and only if the degree 
of $\L$ on each chain of rational curves in $E$
is at least $-1$, by Lemma \ref{chainh1h0}. The proof of Statement 2
is complete.

Assume from now on that $\L$ is $\psi$-admissible. Then 
$\psi_*\L|_E(-\sum P_{i,j})=0$, and 
thus it follows from the exact sequences in \eqref{exx} that
\begin{equation}\label{ii}
\psi_*(\L|_{\wt X}(-\textstyle\sum P_{i,j}))\subseteq\psi_*\L\subseteq
\psi_*(\L|_{\wt X}).
\end{equation}
Furthermore, since $R^1\psi_*\L=0$ and since $R^1\psi_*\L|_E(-\sum
P_{i,j})$ is supported with length $1-\deg\L|_{F_i}$ at $P_i$, the
rightmost inclusion is strict at $P_i$ if $\deg\L|_{F_i}=0$, and an
equality if $\deg\L|_{F_i}=1$, for each $i=1,\dots,e$. 
In particular, if $\psi_*\L$ is
invertible, then $\deg\L|_{F_i}=0$ for every $i=1,\dots,e$.

Moreover, for each
$i=1,\dots,e$, we have
the following natural commutative diagram:
\begin{equation}\label{ddg}
\begin{CD}
\psi_*\L|_{P_i} @>v'_i>> \psi_*(\L|_{F_i})\\
@VVV @V(\rho_{i,1},\rho_{i,2})VV\\
\psi_*(\L|_{\wt X})|_{P_i} @>>> \psi_*(\L|_{P_{i,1}}\oplus\L|_{P_{i,2}})
\end{CD}
\end{equation}
where all the maps are induced by restriction. 
Then $\psi_*\L$ is invertible at $P_i$ if and
only if $\deg\L|_{F_i}=0$ and the 
compositions
\begin{equation}\label{cddg}
\psi_*\L\to\psi_*(\L|_{\wt X})\to\psi_*(\L|_{P_{i,j}})
\end{equation}
are nonzero for $j=1,2$. This is the case only if the maps 
$\rho_{i,1}$ and $\rho_{i,2}$ are nonzero. 

Now, if $\deg\L|_{F_i}=0$
then $\rho_{i,1}$ and $\rho_{i,2}$ are nonzero if and only if
$\L|_{F_i}=\O_{F_i}$. Indeed, this is clear if
$\L|_{F_i}=\O_{F_i}$. On the other hand, suppose
$\L|_{F_i}\neq\O_{F_i}$. Let $F_{i,1},\dots,F_{i,\ell_i}$ be the
ordered sequence of components of $F_i$ such that $P_{i,1}\in F_{i,1}$
and $P_{i,2}\in F_{i,\ell_i}$. Since $\L|_{F_i}\neq\O_{F_i}$ there is
a smallest (resp.~largest) integer $j$ such that 
$\deg\L|_{F_{i,j}}\neq 0$; if $\rho_{i,1}\neq 0$ (resp.~$\rho_{i,2}\neq
0$) then $\deg\L|_{F_{i,j}}>0$. However, since $\L$ is
$\psi$-admissible, both maps cannot be simultaneously nonzero.

To summarize, if $\psi_*\L$ is invertible then $\L$ is
$\psi$-admissible. On the other hand, observe that $v'_i$ is
surjective for each $i=1,\dots,e$. Indeed, it follows from
applying $\psi_*$ to the first exact sequence in \eqref{exx} that the
map $\psi_*\L\to\psi_*(\L|_{F_i})$ induced by restriction is surjective,
and thus so is $v'_i$. Thus, if $\L|_{F_i}=\O_{F_i}$, the maps 
$\rho_{i,1}$ and $\rho_{1,2}$ are nonzero, and thus, from 
Diagram \eqref{ddg}, the composition \eqref{cddg} is nonzero for
$j=1,2$, whence $\psi_*\L$ is invertible at $P_i$. So, the converse
holds: If $\L$ is $\psi$-admissible then $\psi_*\L$ is invertible.

Observe now that, for each $i=1,\dots,e$, 
the restriction of the evaluation map $v\:\psi^*\psi_*\L\to\L$ to $F_i$
is a map $v_i\:H^0(P_i,\psi_*\L|_{P_i})\ox\O_{F_i}\to\L|_{F_i}$. Thus, 
if $v$ is surjective then $\L$ is positively $\psi$-admissible, and if 
$v$ is an isomorphism then $\psi_*\L$ is invertible and $\L$ is 
$\psi$-invertible.

Assume from now on that $\L$ is positively $\psi$-admissible. 
Note that each $v_i$ is obtained by composing the base-change map 
$v'_i\:\psi_*\L|_{P_i}\to\psi_*(\L|_{F_i})$ with the evaluation map 
$v''_i\:H^0(F_i,\L|_{F_i})\ox\O_{F_i}\to\L|_{F_i}$. Since 
$\L$ is positively $\psi$-admissible, it follows from Lemma 
\ref{chainh1h0} that 
$$
h^1(F_i,\L|_{F_i})=h^1(F_i,\L|_{F_i}(-Q))=0,
$$
and thus, by the Riemann--Roch Theorem,
$h^0(F_i,\L|_{F_i}(-Q))<h^0(F_i,\L|_{F_i})$ 
for every $Q$ on
the nonsingular locus of $F_i$. So $v''_i$ is surjective. Since the
$v'_i$ was already shown to be surjective, so is $v_i$ for each
$i=1,\dots,e$, whence $v$ is surjective.

Moreover, if $\psi_*\L$ is invertible then $v$ is a surjective map
between 
invertible sheaves, whence an isomorphism.
\end{proof}

\begin{Thm}\label{compadm} Let $X$ be a curve and 
$\psi\:Y\to X$ a semistable modification of $X$. 
Let $\L$ and $\M$ be $\psi$-admissible invertible sheaves on $Y$. 
Assume that $\M\otimes\L^{-1}$ is a twister of $Y$ of the form
$$
\O_Y\left(\sum c_E E \right), \,\,\,  c_E\in\mathbb{Z},
$$
where the sum runs over the components $E$ of $Y$ contracted by
$\psi$. Then $\psi_*\L\simeq\psi_*\M$.
\end{Thm}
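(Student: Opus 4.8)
The plan is to lift the whole situation to a smoothing of $Y$, carry out the comparison there, where both sheaves become genuine reflexive sheaves on a normal surface, and then specialize. By the definition of twister there is a regular smoothing $\pi\:\Y\to S$ of $Y$, with $S=\Spec K[[t]]$ and special fibre $Y$, and a Cartier divisor $\Z$ on $\Y$ supported on the components of $Y$ contracted by $\psi$, with $\O_\Y(\Z)|_Y\cong\M\ox\L^{-1}$. The contracted components are the chains of rational curves over the nodes of $X$; in $\Y$ each of them is a $(-2)$-curve (its self-intersection is forced to be $-2$ because the fibre $Y$ of $\pi$ is numerically trivial on it), and the chains form disjoint $A_n$-configurations, whose intersection matrices are negative definite. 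Hence there is a proper birational morphism $\rho\:\Y\to\X$ onto a normal surface contracting exactly these chains to a finite set $\Sigma\subset\X$, an isomorphism over $\X\setminus\Sigma$, with $\Sigma$ consisting of rational double points; moreover $\X\to S$ is a flat family with special fibre $X$, and $\rho$ restricts to $\psi$ over $0$. Let $\T\subset Y\subset\Y$ denote the union of the contracted chains. Finally, extend $\L$ to an invertible sheaf $\N$ on $\Y$ (the obstruction lies in $H^2(Y,\O_Y)=0$, and one algebraises by Grothendieck existence) and put $\N(\Z):=\N\ox\O_\Y(\Z)$, so that $\N|_Y\cong\L$ and $\N(\Z)|_Y\cong\M$.

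The crux is the equality $\rho_*\N=\rho_*\N(\Z)$ of subsheaves of the constant sheaf of rational sections. Since $\Z$ is supported on $\T=\rho^{-1}(\Sigma)$, the sheaves $\N$ and $\N(\Z)$ coincide over $\Y\setminus\T=\rho^{-1}(\X\setminus\Sigma)$, so $\rho_*\N$ and $\rho_*\N(\Z)$ agree over $\X\setminus\Sigma$. As $\Sigma$ has codimension $2$ in the normal surface $\X$, it is enough to prove that both pushforwards are reflexive: a reflexive sheaf on a normal surface is the pushforward $j_*$ of its restriction to the complement of any finite set, so two reflexive sheaves agreeing off $\Sigma$ are equal. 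To see that $\rho_*\N$ is reflexive it suffices to check, locally near $\Sigma$, that enlarging $\N$ to $\N(\T)$ produces no new sections, i.e. that $\rho_*\N=\rho_*\N(\T)$; from $0\to\N\to\N(\T)\to\N(\T)|_\T\to0$ this follows once $H^0(\T,\N(\T)|_\T)=0$. Because $\Sigma$ consists of rational double points the resolution $\rho$ is crepant, so $\omega_\Y|_\T\cong\O_\T$ and adjunction gives $\O_\Y(\T)|_\T\cong\omega_\T$; hence $\N(\T)|_\T\cong\L|_\T\ox\omega_\T$, which on each contracted chain is $\L(-P-Q)$ with $P$ and $Q$ on the extreme curves. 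Its global sections vanish by Lemma \ref{chainh1h0}(2), precisely because $\L$ is $\psi$-admissible and so has degree at most $1$ on every subchain. The identical computation with $\M$ in place of $\L$ gives reflexivity of $\rho_*\N(\Z)$. This reflexivity step, where $\psi$-admissibility is used decisively, is the part I expect to be the main obstacle.

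It remains to descend to $X$. Flatness of $\X\to S$ makes $t$ a nonzerodivisor cutting out $X$, and since $\rho_*\N$ is torsion-free, $t$ is a nonzerodivisor on it; thus $(\rho_*\N)\ox_{\O_\X}\O_X=(\rho_*\N)/t\,\rho_*\N$. Applying $\rho_*$ to $0\to\N\xrightarrow{\,t\,}\N\to\L\to0$ then identifies this quotient with $\psi_*\L$, provided $R^1\rho_*\N=0$. The vanishing holds by the theorem on formal functions: along the infinitesimal neighbourhoods of $\T$ the graded pieces of $\N$ are $\L|_\T\ox\omega_\T^{-k}$ for $k\ge0$, whose degree on every subchain is at least that of $\L$, hence at least $-1$, so all of them have vanishing $H^1$ by Lemma \ref{chainh1h0}(1); consequently $H^1$ vanishes on every thickening of $\T$ and $R^1\rho_*\N=0$ (equivalently, this is the vanishing $R^1\psi_*\L=0$ of Theorem \ref{famchain}(2) read on $\Y$). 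Hence $(\rho_*\N)|_X\cong\psi_*\L$, and similarly $(\rho_*\N(\Z))|_X\cong\psi_*\M$. Restricting the equality $\rho_*\N=\rho_*\N(\Z)$ to the special fibre $X$ yields $\psi_*\L\cong\psi_*\M$, as desired.
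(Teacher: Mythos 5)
Your argument is correct in substance but follows a genuinely different route from the paper's. The paper never leaves the curve $Y$: it first proves a combinatorial claim that adjacent coefficients satisfy $|c_{E_1}-c_{E_2}|\le 1$ and extreme ones $|c_E|\le 1$ (this is where admissibility of both $\L$ and $\M$ enters, via $|\deg_G(\M\ox\L^{-1})|\le 2$ on subchains), then splits each contracted chain into the maximal subchains $G$ on which the $c_E$ are all positive or all negative, observes that $\deg_G\L=-\deg_G\M=\pm1$ there, and uses Lemma \ref{chainh1h0} on each such $G$ to identify both $\psi_*\L$ and $\psi_*\M$ with $(\psi|_W)_*\bigl(\L|_W(-D^-)\bigr)$ for a common subcurve $W$ obtained by deleting the $G$'s. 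You instead pass to the regular smoothing $\Y$ furnished by the twister hypothesis, contract the $A_n$-configurations of $(-2)$-curves to a normal surface $\X$ with rational double points, and exhibit both direct images as the unique reflexive extension of one and the same sheaf across the finite singular locus; the combinatorics of the $c_E$ disappears entirely, replaced by the principle that twisting by a divisor supported on $\rho^{-1}(\Sigma)$ cannot change a reflexive direct image. This is more conceptual, but it buys that insight at the price of several standard-but-nontrivial inputs the paper avoids: algebraic contractibility of the negative-definite configurations, the vanishing $R^1\rho_*\O_\Y=0$ (rationality of the singularities) needed to identify the special fibre of $\X$ with $X$, the formal/algebraic extension of $\L$ to $\N$, and the base-change step at the end.

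One step needs patching. Reflexivity of $\rho_*\N$ amounts to the equality $\rho_*\N=\rho_*\N(k\T)$ for \emph{every} $k\ge0$, since a section of the reflexive hull over $V$ is a section of $\N$ on $\rho^{-1}(V)$ with poles of some finite, but a priori arbitrary, order along $\T$; checking only that no new sections appear at the first order, $\rho_*\N=\rho_*\N(\T)$, is not by itself a valid reduction. You must therefore show $H^0\bigl(\T,\N((k+1)\T)|_\T\bigr)=0$ for all $k\ge0$. Fortunately your computation handles this uniformly: on a contracted chain $F$ one has $\N((k+1)\T)|_F\cong(\L|_F\ox\omega_F^{\ox k})(-P-Q)$, and since $\omega_F$ has nonpositive degree on every subchain of $F$, we get $\deg_G(\L|_F\ox\omega_F^{\ox k})\le\deg_G(\L|_F)\le1$ for every subchain $G\subseteq F$, so Lemma \ref{chainh1h0}(2) applies for all $k$ at once. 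With that amendment, and the analogous statement with $\M$ in place of $\L$, the proof is complete.
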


\begin{proof} Set $\T:=\M\otimes\L^{-1}$. Let $\R$ be the 
set of smooth, rational curves contained in $Y$ and contracted by
$\psi$. If $\R=\emptyset$, then $\T=\O_Y$ and thus $\L\cong\M$. 
Suppose $\R\ne\emptyset$. Let $\K$ be the set of maximal chains of rational 
curves contained in $\R$. 

{\it Claim:} For every $F\in \K$ and every two components $E_1,E_2\subseteq F$ 
such that $E_1\cap E_2\ne\emptyset$, we have $|c_{E_1}-c_{E_2}|\le 1$. In 
addition, if $E$ is an extreme component of $F$, then $|c_E|\le 1$.

Indeed, let $E_1,\dots E_n$ be the components of $F$, 
ordered in such a way that $\# E_i\cap E_{i+1}=1$ for $i=1,\dots,n-1$. Since  
 $\L$ and $\M$ are admissible, $|\deg_G\T|\le 2$ for every subchain 
$G$ of $F$. Set $c_{E_0}:=c_{E_{n+1}}:=0$. We will reason by contradiction. 
Thus, up to reversing the order of the $E_i$, we may assume 
that $c_{E_i}-c_{E_{i+1}}\ge 2$ 
for some $i\in\{0,\dots,n\}$. Then
$$
c_{E_i}\le c_{E_{i-1}}\le \cdots \le c_{E_1}\le c_{E_0}=0,
$$
because, if  
$c_{E_j}>c_{E_{j-1}}$ for some $j\in\{1,\dots,i\}$, then 
$$
\deg_{E_j\cup\cdots\cup E_i}\T=c_{E_{j-1}}-c_{E_j}+c_{E_{i+1}}-c_{E_i}<-2.
$$
Similarly, $c_{E_{i+1}}\ge c_{E_{i+2}}\ge \cdots \ge c_{E_n}\ge c_{E_{n+1}}=0$. 
But then
$$
0\le c_{E_{i+1}}<c_{E_i}\le 0,
$$
a contradiction that proves the claim.
  
Now, for each $F\in\K$, let $F^\dagger$ be the (possibly empty) union of 
components $E\subseteq F$ such that $c_E=0$. For each connected 
component $G$ of $\overline{F-F^\dagger}$ and irreducible components   
$E_1,E_2\subseteq G$, it follows from the claim that $c_{E_1}\cdot c_{E_2}>0$. 
Let $\K^+$ (resp.~$\K^-$) be the collection of connected components $G$ of  
$\overline{F-F^\dagger}$ for $F\in\K$ such that $c_E>0$ (resp.~$c_E<0$) 
for every irreducible component $E\subseteq G$.

Notice that, again by the claim, 
  \begin{equation}\label{extrcoef}
 c_E=
  \begin{cases}
  \begin{array}{ll}
  \,\,\,\,1 & \text{if $E$ is an extreme component of some $G\in \K^+_F$}\\
  -1 & \text{if $E$ is an extreme component of some $G\in \K^-_F$.}
    \end{array}
  \end{cases}
   \end{equation}
So, being $\L$ and $\M$ admissible, 
 \begin{equation}\label{degF}
 \deg_G \L=-\deg_G\M=
  \begin{cases}
  \begin{array}{ll}
  \,\,\,\,1 & \text{if } G\in \K^+ \\
  -1 & \text{if } G\in \K^-.  
  \end{array}
  \end{cases}
 \end{equation}

Define 
 $$
W^+:=\overline{Y- \cup_{G\in \K^+} G}, 
  \,\,\,\,\,\,\,\,\,\,
  W^-:=\overline{Y-\cup_{G\in \K^-} G},
  \,\,\,\,\,\,\,\,\,\,
  W:=\overline{Y-\cup_{G\in \K^-\cup\K^+} G}.
$$
For each $G\in\K^+\cup \K^-$, let $N_G$ and $N'_G$ denote the points of 
$G\cap \overline{Y-G}$, and put
$$
D^+:=\sum_{G\in\K^+}(N_G+N'_G) \,\,\text{ and }\,\, 
D^-:=\sum_{G\in\K^-}(N_G+N'_G).
$$
We may view $D^+$ and $D^-$ as divisors of $W$. 
Thus, by \eqref{extrcoef}, 
 \begin{equation}\label{rel1}
  \M|_W\simeq \L|_W(D^+-D^-).
 \end{equation}
 
Consider the natural diagram
$$
\begin{CD}
@. @.   @. 0 @. \\
 @. @.  @.   @VVV \\
@. @.   @. \L|_W(-D^-)@. \\
 @. @.  @.   @VVV \\
0@>>>  \underset{G\in \K^+}{\bigoplus} \L|_G(-N_G-N'_G)  
@>>> \L @>>>\L|_{W^+} @>>> 0\\
 @. @.  @.   @VVV \\
@.  @.  @. \underset{G\in \K^-}{\bigoplus} \L|_G   \\
 @. @.  @.   @VVV \\
@.  @.  @. 0  \\
\end{CD}
$$
where the horizontal and vertical sequences are exact. 
By (\ref{degF}) and Lemma \ref{chainh1h0}, and using the 
Riemann--Roch Theorem, 
$$
R^i\psi_* \L|_G(-N_G-N'_G)=H^i(G,\L|_G(-N_G-N'_G))\ox\O_{\psi(G)}=0
$$
for $G\in \K^+$ and $i=0,1$, whereas
$$
\psi_*\L|_G=H^0(G,\L|_G)\ox\O_{\psi(G)}=0 \; \text{ for } G\in \K^-.
$$
Hence, it follows from the above diagram, by considering the 
associated long exact sequences in higher direct images of $\psi$, that
\begin{equation}\label{rel2}
\psi_*\L\simeq (\psi|_W)_*\L|_W(-D^-).
\end{equation}

Consider a second  diagram, similar to the above, but with the 
roles of $\K^+$ and $\K^-$, and thus of $D^+$ and $D^-$, reversed, and 
$\M$ substituted for $\L$. As before, 
$$
R^i\psi_*\M|_G(-N_G-N'_G))=
H^i(G,\M|_G(-N_G-N'_G))\ox\O_{\psi(G)}=0
$$
for $G\in \K^-$ and $i=0,1$, whereas
$$
\psi_*\M|_G\simeq H^0(G,\M|_G)\ox\O_{\psi(G)}=0 \text{ for } G\in \K^+.
$$
Hence, taking the associated long exact sequences, 
\begin{equation}\label{rel3}
\psi_*\M\simeq (\psi|_W)_*\M|_W(-D^+).
 \end{equation} 
 Combining (\ref{rel1}), (\ref{rel2}) and (\ref{rel3}), 
we get $\psi_*\L\simeq \psi_*\M$.
 \end{proof}

\section{Stability}\label{4}

\begin{Thm}\label{famchain2} 
Let $X$ be a curve and $\psi\:Y\to X$ a 
semistable modification of $X$. Let $P$ be a simple point of $Y$ 
not lying on any component contracted by $\psi$. Let $\E$ be a locally
free sheaf on $X$ and $\L$ an invertible sheaf on $Y$. Then 
$\L$ is semistable (resp.~$P$-quasistable, resp.~stable) 
with respect to $\psi^*\E$ if and only if 
$\L$ is $\psi$-admissible (resp.~negatively $\psi$-admissible,
resp.~$\psi$-invertible) 
and $\psi_*\L$ is 
semistable (resp.~$\psi(P)$-quasistable, resp.~stable) 
with respect to $\E$.
\end{Thm}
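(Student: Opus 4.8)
The plan is to reduce, as the paper notes is permissible, to \emph{connected} subcurves on both sides, and to compare the defining inequalities $\chi(\L_Z\ox\psi^*\E|_Z)\ge 0$ for connected subcurves $Z\subseteq Y$ with the inequalities $\chi(\I_W\ox\E|_W)\ge 0$ for connected subcurves $W\subseteq X$, where $\I:=\psi_*\L$. I fix the decomposition $Y=\wt X\cup E$, with $E$ the union of the maximal chains $F_i$ contracted by $\psi$ to nodes $P_i$ of $X$, and set $r:=\rk\E$. Since $\psi^*\E|_F$ is trivial of rank $r$ on any subchain $F$, one has $\chi(\L_F\ox\psi^*\E|_F)=r(\deg(\L|_F)+1)$. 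The first step is to extract admissibility purely from the subchains: applying the hypothesis to $F$ gives $\deg\L|_F\ge-1$, and applying it to the complementary subcurve $\ol{Y-F}$, which is connected (it contains $\wt X$), meets $F$ in $k_F=2$ points, and satisfies $\chi(\L\ox\psi^*\E)=0$, gives $\chi(\L_{\ol{Y-F}}\ox\psi^*\E|_{\ol{Y-F}})=r(1-\deg\L|_F)$, whence $\deg\L|_F\le 1$. Thus $\L$ is $\psi$-admissible. To separate the three cases I track whether equality is allowed: since $\ol{Y-F}$ always contains the component through $P$, in the $P$-quasistable and stable cases $\deg\L|_F\le 1$ is strict, forcing $\deg\L|_F\le 0$; in the stable case $F$ forces $\deg\L|_F\ge 0$ too. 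This matches negatively admissible and $\psi$-invertible, and the same computation read backwards gives the converse on chain components.

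Next I settle Property~1. When $\L$ is $\psi$-admissible, Theorem~\ref{famchain} gives $R^1\psi_*\L=0$ and that $\I$ is torsion-free, rank-$1$ of degree $d$; with the projection formula $R^i\psi_*(\L\ox\psi^*\E)=R^i\psi_*\L\ox\E$ this yields $\chi(\L\ox\psi^*\E)=\chi(\I\ox\E)$, so Property~1 transfers in both directions. For the subcurve inequalities, given a connected $W\subseteq X$ let $\wt W\subseteq\wt X$ be its proper transform and put $Z_0:=\wt W\cup(\text{chains }F_i\text{ over nodes internal to }W)$, so $\psi|_{Z_0}\:Z_0\to W$ is a semistable modification and $\L|_{Z_0}$ is admissible. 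Inserting chains and attaching rational tails preserves arithmetic genus and $\deg\psi^*\E$, so for any connected $Z$ with $\psi(Z)=W$ of the form $Z_0$ with boundary tails one has $\chi(\L_Z\ox\psi^*\E|_Z)=r\deg(\L|_Z)+\deg(\E|_W)+r\chi(\O_W)$.

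The heart of the matter is the degree identity $\deg_W(\I)=\deg(\L|_{Z(W)})$, where $Z(W)$ is $Z_0$ with the $W$-side subchain of least degree attached at each boundary node; equivalently $\chi(\I_W\ox\E|_W)=\chi(\L_{Z(W)}\ox\psi^*\E|_{Z(W)})$. I expect this local computation at the boundary nodes to be the main obstacle. There $\I$ interpolates between $\psi_*(\L|_{\wt X}(-\sum P_{i,j}))$ and $\psi_*(\L|_{\wt X})$, and one must check that the $W$-side branch contributes exactly the least prefix degree $\min(0,\deg\L|_{E_1\cup\cdots\cup E_j})$ of the boundary chain; this is the analogue, for longer chains, of the dichotomy (degree $0$ versus $1$ on a contracted component) already seen in the proof of Theorem~\ref{famchain}.

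Granting this, both directions follow. Adjoining an internal chain $G$ to $Z$ changes $\chi(\L_Z\ox\psi^*\E|_Z)$ by $r(\deg\L|_G-1)\le 0$, and replacing optimal tails by others can only raise the degree; hence for every connected $Z$ meeting $\wt X$, with $W=\psi(Z)$, one gets $\chi(\L_Z\ox\psi^*\E|_Z)\ge\chi(\I_W\ox\E|_W)$, the case $W=X$ being covered by $\chi(\I\ox\E)=0$. So if $\L$ is admissible and $\I$ is semistable, then $\chi(\L_Z\ox\psi^*\E|_Z)\ge 0$ for all $Z$ (using the chain formula for $Z\subseteq E$), giving semistability of $\L$; conversely $Z(W)$ is a proper connected subcurve realizing equality, so semistability of $\L$ forces $\chi(\I_W\ox\E|_W)\ge 0$ for all proper $W$. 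Finally I track equality for the quasistable and stable cases: the component through $P$ lies in $Z=Z_0\cup(\text{tails})$ if and only if its image lies in $W$, so equality in $Z(W)$ occurs exactly when it occurs in $W$; and for negatively admissible (resp.\ $\psi$-invertible) $\L$ the strict inequalities $\deg\L|_G-1<0$ force $Z=Z_{\mathrm{sat}}$ and make the $W=X$ case strict, so the ``component through $P$ not contained'' (resp.\ strict) conditions correspond on the two sides.
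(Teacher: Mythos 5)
Your outline reproduces the architecture of the paper's own proof: the preliminary reduction extracting $\psi$-admissibility (and its negative and invertible refinements) from the stability inequalities applied to contracted chains and to their complements; the comparison $\chi((\psi_*\L)_W\ox\E|_W)=\chi(\L|_Z\ox\psi^*\E|_Z)$ for a distinguished connected subcurve $Z$ with $V_1\subseteq Z\subseteq\psi^{-1}(W)$ minimizing $\deg(\L|_U)$ over such $U$; and the monotonicity argument comparing an arbitrary connected $U\subseteq Y$ with $Z$ through the saturation $\wh U$. Those reductions are essentially correct as you state them (one small quibble: $\ol{Y-F}$ need not be connected, since $\wt X$ may be disconnected; the inequality still applies because Property 2 for a disconnected subcurve follows from the connected case, but your parenthetical justification is off).

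The gap is that the step you yourself flag as ``the heart of the matter'' and ``the main obstacle'' --- the identity $\deg_W(\psi_*\L)=\deg(\L|_{Z(W)})$, equivalently $(\psi_*\L)_W\cong\psi_*(\L|_{Z(W)})$ together with $R^1\psi_*(\L|_{Z(W)})=0$ --- is asserted and then assumed (``Granting this\dots''). That is exactly the Claim that occupies the bulk of the paper's proof, and it is not a routine local check. One must: (i) construct $Z$ explicitly, partitioning the boundary chains $F_i$ according to $\deg(\L|_{F_i})\in\{1,0,-1\}$ and, among the degree-$0$ chains, according to whether the last component of nonzero degree has degree $+1$ or $-1$, truncating each chain accordingly; (ii) prove the vanishings $h^0(G_i,\L|_{G_i}(-M_i))=0$ and $h^1(\wh G_i,\L|_{\wh G_i}(-B_i-N_i))=0$ via Lemma \ref{chainh1h0} and Riemann--Roch; and (iii) feed these into three exact sequences (restriction to $Z$, to $Z'$, to $V_1$), together with Theorem \ref{famchain} applied to $\L|_{V_2}$, to obtain surjectivity of $\psi_*\L\to\psi_*(\L|_Z)$ and injectivity of $\psi_*(\L|_Z)\to\psi_*(\L|_{V_1})$, whence the isomorphism onto $(\psi_*\L)_W$. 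Until this is carried out, the equivalence is unproved: what you have established are the comparatively easy reductions on either side of the one genuinely hard step.
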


\begin{proof}
Since $\psi^*\E$ has 
degree 0 on every component of $Y$ contracted by $\psi$, 
and $P$ does not lie on any of these components, 
it follows from the definitions that a semistable (resp.~$P$-quasistable, 
resp.~stable) sheaf has degree $-1$, $0$ or $1$ 
(resp.~$-1$ or $0$, resp. $0$) on every chain of rational curves of
$Y$ contracted by $\psi$.

We may thus assume that $\L$ is $\psi$-admissible. 
Let $W$ be any connected subcurve of $X$. 
Set $W':=\ol{X-W}$ and $\Delta_W:=W\cap W'$. 
Set $\delta:=\#\Delta_W$. Let $V_1:=\ol{Y-\psi^{-1}(W')}$ and 
$V_2:=\ol{Y-\psi^{-1}(W)}$. Let $F_1,\dots,F_r$ be the maximal chains of 
rational curves contained in $\psi^{-1}(\Delta_W)$. Then $0\leq r\leq \delta$. 

{\it Claim:} $(\psi_*\L)_W\cong\psi_*(\L|_Z)$ 
for a certain connected subcurve $Z\subseteq Y$ such that:
\begin{enumerate}
\item $V_1\subseteq Z\subseteq\psi^{-1}(W)$.
\item For each connected subcurve $U\subseteq Y$ 
such that $V_1\subseteq U\subseteq\psi^{-1}(W)$,
$$
\deg(\L|_U)\geq\deg(\L|_Z).
$$
\end{enumerate}
(Notice that Property 1 implies that $P\in Z$ if and only if $\psi(P)\in W$.) 

Indeed, if $W=X$, let $Z:=\psi^{-1}(W)$. Suppose $W\neq X$. Then $\delta>0$.  
Let $M_1,\dots,M_\delta$ be the points of intersection of $V_1$ 
with $V'_1:=\ol{Y-V_1}$ and $N_1,\dots,N_\delta$ those of $V_2$ with 
$V'_2:=\ol{Y-V_2}$. 

Write $F_i=F_{i,1}\cup\dots\cup F_{i,e_i}$, 
where $F_{i,j}\cap F_{i,j+1}\neq\emptyset$ for $j=1,\dots,e_i-1$ and 
$F_{i,1}$ intersects $V_1$. Up to reordering the $M_i$ and $N_i$, 
we may assume that $F_{i,1}$ intersects $V_1$ 
at $M_i$ and $F_{i,e_i}$ intersects $V_2$ at $N_i$ for $i=1,\dots,r$. 
(Thus $M_i=N_i$ for $i=r+1,\dots,\delta$.) Up to 
reordering the $F_i$, we may also assume that there 
are nonnegative integers $u$ and $t$ with $u\leq t$ 
such that 
$$
\deg(\L|_{F_i})=\begin{cases}
1&\text{for $i=1,\dots,u$}\\
0&\text{for $i=u+1,\dots,t$}\\
-1&\text{for $i=t+1,\dots,r$.}
\end{cases}
$$

Up to reordering the $F_i$, we 
may assume there is an integer $b$ with $u\leq b\leq t$ such that, 
for each $i=u+1,\dots,t$, we have that $i>b$ if and only if 
$\deg(\L|_{F_{i,j}})=0$ for every $j$ or the largest integer $j$ such that 
$\deg(\L|_{F_{i,j}})\neq 0$ is such that $\deg(\L|_{F_{i,j}})=-1$. Set 
$G_i:=F_i$ for $i=b+1,\dots,r$. For each $i=u+1,\dots,b$, let 
$G_i:=F_{i,1}\cup\dots\cup F_{i,j-1}$, where $j$ is the largest integer such 
that $\deg(\L|_{F_{i,j}})=1$, let $\wh G_i:=\ol{F_i-G_i}$ and denote by 
$B_i$ the point of intersection of $G_i$ and $\wh G_i$. (Notice 
that $1< j\leq e_i$.) Let $B_i:=M_i$ and $\wh G_i:=F_i$ for $i=1,\dots,u$, and 
$B_i:=N_i$ for $i=b+1,\dots,\delta$.  

For $i=u+1,\dots,r$, since the degree of $\L|_{G_i}(B_i)$ on each subchain of 
$G_i$ is at most 1, it follows from Lemma~\ref{chainh1h0} that
\begin{equation}\label{MN3}
h^0(G_i,\L|_{G_i}(-M_i))=0\quad\text{for $i=u+1,\dots,r$}.
\end{equation}
Furthermore, for $i=1,\dots,b$, the total degree of $\L|_{\wh G_i}$ is 1; thus, 
by Lemma~\ref{chainh1h0} and the Riemann--Roch Theorem,
\begin{equation}\label{MN4}
h^1(\wh G_i,\L|_{\wh G_i}(-B_i-N_i))=0\quad\text{for $i=1,\dots,b$}.
\end{equation}

Set
$$
Z:=V_1\cup G_{u+1}\cup\dots\cup G_r
$$
and $Z':=\ol{Y-Z}$. Put $\Delta_Z:=Z\cap Z'$. Notice that 
$\Delta_Z=\{B_1,\dots,B_\delta\}$. Also, notice that $Z$ is connected, and 
$$
\deg(\L|_U)\geq\deg(\L|_Z)=\deg(\L|_{V_1})-(b-u)-(r-t)
$$
for each connected subcurve $U\subseteq Y$ such that 
$V_1\subseteq U\subseteq\psi^{-1}(W)$.

We have three natural exact sequences:
\begin{equation}\label{ex1}
0\to\L|_{Z'}\Big(-\sum_{i=1}^\delta B_i\Big)\to\L\to\L|_Z\to 0,
\end{equation}
\begin{equation}\label{ex2}
0\to\bigoplus_{i=1}^b\L|_{\wh G_i}(-B_i-N_i)\to
\L|_{Z'}\Big(-\sum_{i=1}^\delta B_i\Big)
\to\L|_{V_2}\Big(-\sum_{i=b+1}^\delta B_i\Big)\to 0,
\end{equation}
\begin{equation}\label{ex3}
0\to\bigoplus_{i=u+1}^r\L|_{G_i}(-M_i)\to\L|_Z\to\L|_{V_1}\to0.
\end{equation}
Since $\L$ is $\psi$-admissible, so are 
$\L|_{V_1}$ with respect to $\psi|_{V_1}\:V_1\to W$ and $\L|_{V_2}$ with 
respect to $\psi|_{V_2}\:V_2\to W'$. Then $\psi_*(\L|_{V_1})$ is a 
torsion-free, rank-1 sheaf on $W$ and 
$R^1\psi_*(\L|_{V_2}(-\sum B_i))=0$ by Theorem \ref{famchain}. 

Since $R^1\psi_*(\L|_{V_2}(-\sum B_i))=0$, from 
\eqref{MN4} and the long exact sequence of higher direct images under 
$\psi$ of \eqref{ex2} and \eqref{ex1} we get that 
$R^1\psi_*(\L|_{Z'}(-\sum B_i))=0$ 
and the natural map $\psi_*\L\to\psi_*(\L|_Z)$ is surjective. Also, it 
follows from \eqref{MN3} and the long exact sequence of 
higher direct images under $\psi$ of \eqref{ex3} that the natural map 
$\psi_*(\L|_Z)\to\psi_*(\L|_{V_1})$ is injective. Thus, since 
$\psi_*(\L|_{V_1})$ is a torsion-free, rank-1 sheaf on $W$, so is 
$\psi_*(\L|_Z)$. And, since $\psi_*\L\to\psi_*(\L|_Z)$ is surjective, we 
get an isomorphism $(\psi_*\L)_W\cong\psi_*(\L|_Z)$, 
finishing the proof of the claim.

To prove the ``only if'' part, let $W$ be any connected 
subcurve of $X$. Let $Z$ be as in the claim. Since $\L$ is admissible 
with respect to $\psi$, Theorem \ref{famchain} 
yields $R^1\psi_*\L=0$, and hence 
$R^1\psi_*(\L|_Z)=0$ from the long exact sequence of higher direct images 
under $\psi$ of \eqref{ex1}. Thus, by the claim and the 
projection formula,  
\begin{equation}\label{ZW}
\chi((\psi_*\L)_W\ox\E|_W)=\chi(\psi_*(\L|_Z)\ox\E|_W)=
\chi(\L|_Z\ox(\psi^*\E)|_Z).
\end{equation}
If $\L$ is semistable (resp.~$P$-quasistable, resp.~stable) then 
$\chi(\L|_Z\ox(\psi^*\E)|_Z)\geq 0$ (resp.~with equality only if $Z=Y$ or 
$Z\not\ni P$, resp.~with equality only if $Z=Y$). Now, if 
$Z=Y$ then $W=X$. Also, $P\in Z$ if and only if $\psi(P)\in W$. So 
\eqref{ZW} yields $\chi((\psi_*\L)_W\ox\E|_W)\geq 0$ 
(resp.~with equality only if $W=X$ or 
$W\not\ni\psi(P)$, resp.~with equality only if $W=X$).

As for the ``if'' part, let $U$ be a connected subcurve of $Y$. 
If $U$ is a union of components of $Y$ contracted by $\psi$, then
$U$ is a chain of rational curves of $Y$ collapsing to a node of $X$, 
and hence $\L|_U$ has degree at least $-1$ 
(exactly $0$ if $\L$ is $\psi$-invertible). Thus 
$$
\chi(\L|_U\ox\psi^*\E|_U)=\text{rk}(\E)\chi(\L|_U)\geq 0,
$$
with equality only if $\L$ is not $\psi$-invertible.

Suppose now that $U$ contains a component of $Y$ not contracted by
$\psi$. Then $W:=\psi(U)$ is a connected subcurve of $X$. 
Let $\wh U$ be the smallest subcurve of $Y$ containing $U$ and 
$\ol{Y-\psi^{-1}(W')}$, where $W':=\ol{X-W}$. Then $\wh U$ is connected 
and contained in $\psi^{-1}(W)$. Furthermore, $\chi(\O_U)-\chi(\O_{\wh U})$ 
is the number of connected components of $\ol{\wh U-U}$. Thus 
\begin{equation}\label{UU}
\deg(\L|_U)+\chi(\O_U)\geq\deg(\L|_{\wh U})+\chi(\O_{\wh U}),
\end{equation}
with equality only if $\L$ has degree 1 on every connected component of 
$\ol{\wh U-U}$. Let $Z$ be as in the claim. Notice that 
$\chi(\O_{\wh U})=\chi(\O_Z)$. Since 
$\deg(\L|_{\wh U})\geq\deg(\L|_Z)$ by the claim,
using \eqref{ZW} and \eqref{UU} we get
\begin{align*}
\chi(\L|_U\ox\psi^*\E|_U)&=\text{rk}(\E)(\deg(\L|_U)+\chi(\O_U))
+\deg(\psi^*\E|_U)\\
&\geq\text{rk}(\E)(\deg(\L|_{\wh U})+\chi(\O_{\wh U}))
+\deg(\psi^*\E|_{\wh U})\\
&=\text{rk}(\E)(\deg(\L|_{\wh U})+\chi(\O_Z))
+\deg(\psi^*\E|_Z)\\
&\geq\text{rk}(\E)(\deg(\L|_Z)+\chi(\O_Z))
+\deg(\psi^*\E|_Z)\\
&=\chi(\L|_Z\ox(\psi^*\E)|_Z)\\
&=\chi((\psi_*\L)_W\ox\E|_W).
\end{align*}
Assume that $\psi_*\L$ is 
semistable (resp.~$\psi(P)$-quasistable, resp.~stable) 
with respect to $\E$. Then $\chi((\psi_*\L)_W\ox\E|_W)\geq 0$ 
(resp.~with equality only if $W=X$ or 
$W\not\ni\psi(P)$, resp.~with equality only if $W=X$). So 
$\chi(\L|_U\ox\psi^*\E|_U)\geq 0$. Suppose $\chi(\L|_U\ox\psi^*\E|_U)=0$. 
Then $\chi((\psi_*\L)_W\ox\E|_W)=0$ and equality holds in \eqref{UU}. 
If $W\not\ni\psi(P)$ then $U\not\ni P$. Suppose $W=X$. Then 
$\wh U=Y$. If $U\neq Y$ then $\L$ has degree 1 on each connected component of 
$\ol{Y-U}$, and thus $\L$ is not negatively $\psi$-admissible. 
\end{proof}

\section{Sheaves on quasistable curves}\label{5}

If $X$ is a semistable curve, a stable 
curve $\check X$ may be obtained from $X$ by contracting all exceptional 
components. We say that $\check X$ is the \emph{stable model} of $X$. 

Let $f\col Y\ra S$ be a family of semistable curves. We call 
the pair $(\check f,\psi)$, consisting of a family
of stable curves $\check f\:X\to S$ and a $S$-map
$\psi\:Y\to X$, a \emph{stable model} of $f$ if
$\psi$ is a semistable 
modification
of $\check f$. So, for every geometric point $s$ of
$S$ the induced map $\psi_s\:Y_s\to X_s$
is the map contracting
all exceptional components of $Y_s$. We will also call 
$\check f$ the stable model of $f$ and $\psi$ the \emph{contraction map}.

Stable models always exist, and are unique up
to unique isomorphism by the following proposition.

\begin{Prop}\label{exstmod}
Let $f\:Y\to S$ be a family of semistable curves. The following 
statements hold:
\begin{enumerate}
\item The family $f$ has a stable model.
\item If $\check f\:X\to S$ and $\check f'\:X'\to S$ are stable models of $f$,
with contraction maps $\psi\:Y\to X$ and $\psi'\:Y\to X'$, then there is a unique
isomorphism $u\:X'\to X$ such that $\check f'=\check fu$ and $\psi=u\psi'$.
\item For each stable model $\check f$ with contraction map $\psi$, the 
comorphism $\O_X\to\psi_*\O_Y$ is an isomorphism, $R^1\psi_*\L=0$, and
the pullback of the relative dualizing sheaf of $\check f$ under $\psi$ is the 
relative dualizing sheaf of $f$.
\end{enumerate} 
\end{Prop}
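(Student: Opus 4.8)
The plan is to establish the three statements in the order (1), (3), (2), since the comorphism isomorphism in (3) is exactly the input needed for the rigidity argument behind the uniqueness in (2).

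For existence (1), the idea is to realize the contraction as the morphism attached to a suitable line bundle. Since all assertions are local in the \'etale topology of $S$, I would assume $S$ Noetherian and choose on $Y$ a line bundle $\mathcal N$ with $\deg(\mathcal N|_E)=0$ for every exceptional component $E$ and $\deg(\mathcal N|_C)>0$ for every other component $C$ (a high power of the relative dualizing sheaf $\omega_{Y/S}$ serves when $g\geq 2$, as $\deg\omega_{Y_s}|_E=2\cdot 0-2+k_E=0$ on exceptional $E$ and is positive elsewhere). On each geometric fiber $\mathcal N$ is nef and some power is globally generated, so for $n\gg 0$ the sheaf $f_*\mathcal N^{\ox n}$ is locally free with formation commuting with base change, and the associated $S$-morphism $\psi\col Y\to\IP(f_*\mathcal N^{\ox n})$ contracts fiberwise precisely the exceptional components. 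Taking $X$ to be the image and $\check f\col X\to S$ the induced map, constancy of the fiberwise Hilbert polynomial together with cohomology and base change shows $\check f$ is flat with stable geometric fibers, and by construction $\psi$ is a semistable modification of $\check f$; this is the desired stable model.

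Next I would prove (3) for an arbitrary stable model $(\check f,\psi)$. Because $\psi$ contracts only exceptional components, the restriction of $\O_Y$ to every chain of rational curves over a node of $X_s$ has degree $0$, so $\O_Y$ is $\psi$-invertible; Theorem~\ref{famchain} then gives that $\psi_*\O_Y$ is invertible of relative degree $0$, that $R^1\psi_*\O_Y=0$, that its formation commutes with base change, and that $\psi^*\psi_*\O_Y\to\O_Y$ is an isomorphism. For the comorphism $\O_X\to\psi_*\O_Y$, I would check it on fibers: each contracted fiber of $\psi_s$ is a connected chain $F$ with $H^0(F,\O_F)=\kappa(s)$ and $H^1(F,\O_F)=0$, so the fiberwise comorphism $\O_{X_s}\to\psi_{s*}\O_{Y_s}$ is an isomorphism. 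As both $\O_X$ and $\psi_*\O_Y$ are $S$-flat with formation of the latter commuting with base change, a standard base-change and Nakayama argument upgrades this to an isomorphism $\O_X\xrightarrow{\sim}\psi_*\O_Y$, which is statement (3a); statement (3b) is $R^1\psi_*\O_Y=0$.

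For the last assertion of (3), the same degree computation shows $\omega:=\omega_{Y/S}$ restricts to $\O_F$ on each contracted chain $F$, so $\omega$ is also $\psi$-invertible and Theorem~\ref{famchain} yields $\psi^*\psi_*\omega\cong\omega$. It then suffices to identify $\psi_*\omega_{Y/S}$ with $\omega_{X/S}$. I would do this fiberwise and globalize: from $R\psi_{s*}\O_{Y_s}=\O_{X_s}$ (established above) relative duality for the birational contraction $\psi_s$ gives a trace isomorphism $\psi_{s*}\omega_{Y_s}\xrightarrow{\sim}\omega_{X_s}$; since the relative dualizing sheaf commutes with base change and $\psi_*\omega_{Y/S}$ does as well (as $\omega$ is $\psi$-invertible), the relative trace map $\psi_*\omega_{Y/S}\to\omega_{X/S}$ is a fiberwise isomorphism between $S$-flat sheaves, hence an isomorphism. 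Combining, $\psi^*\omega_{X/S}\cong\psi^*\psi_*\omega_{Y/S}\cong\omega_{Y/S}$. I expect this identification of dualizing sheaves, resting on duality for the non-flat contraction $\psi$, to be the main technical obstacle.

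Finally, for uniqueness (2), let $(\check f,\psi)$ and $(\check f',\psi')$ be stable models. Both maps contract exactly the exceptional chains of $Y$ to points, so $\psi$ is constant on the geometric fibers of $\psi'$ and conversely. Using $\psi'_*\O_Y=\O_{X'}$ from (3), the rigidity lemma furnishes a unique $S$-morphism $u\col X'\to X$ with $\psi=u\psi'$, and symmetrically $v\col X\to X'$ with $\psi'=v\psi$. Then $(uv)\psi=\psi$ and $(vu)\psi'=\psi'$; since $\psi$ and $\psi'$ are dominant with reduced source, hence epimorphisms onto the separated $S$-schemes $X$ and $X'$, this forces $uv=\mathrm{id}_X$ and $vu=\mathrm{id}_{X'}$, so $u$ is an isomorphism. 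Finally $\check f u\psi'=\check f\psi=f=\check f'\psi'$ gives $\check f'=\check f u$, and uniqueness of $u$ is part of the rigidity lemma.
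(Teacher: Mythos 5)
Your proposal is correct in outline, reaches all three statements, and in one place supplies an argument the paper omits; but at the genuinely delicate point it takes a different (and heavier) route than the paper. For existence your construction is the paper's: the paper takes $X=\text{Proj}_S\big(\bigoplus_n f_*(\omega^{\ox n})\big)$ and cites Catanese for the global generation of $\omega_s^{\ox n}$ for $n\geq 2$ — you should invoke such a result rather than asserting that ``some power is globally generated,'' since semi-ampleness of a nef sheaf on a nodal curve is not formal. Note also that the paper proves Statement 3 \emph{before} Statement 1 and uses it (via $b^*\L\cong\omega_s$ for an abstract fiberwise stable model) to verify that the fibers of its Proj are the stable models; your order forces you to justify directly that the morphism attached to $\omega_s^{\ox n}$ contracts exactly the exceptional components, which you only assert. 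For the comorphism both arguments rest on Theorem~\ref{famchain}, the paper's being slightly slicker: since $v\circ\psi^*(\psi^\#)$ is an isomorphism, $\psi^\#$ is a surjection of invertible sheaves. The real divergence is the identification $\psi^*\omega_{X/S}\cong\omega_{Y/S}$. You route this through Grothendieck duality for the non-flat contraction $\psi$ (trace map, $\psi^!$ compatibilities); this can be made to work, but your fiberwise-plus-base-change patch needs the \emph{global} relative trace map to exist, since a fiberwise isomorphism alone only determines $\psi_*\omega_{Y/S}$ up to a twist by a line bundle pulled back from $S$. The paper avoids duality for $\psi$ altogether: it computes $R^1f_*(\psi^*\check\omega)\cong\O_S$, applies Kleiman's relative duality for the \emph{flat} map $f$ to produce a fiberwise nonzero map $h\colon\psi^*\check\omega\to\omega$, and concludes by matching multidegrees on each fiber — more elementary and self-contained. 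Finally, the paper gives no proof of Statement 2 at all; your rigidity argument (factorization through a proper map with $\psi'_*\O_Y=\O_{X'}$, then cancelling $\psi$ as an epimorphism) is a correct way to fill that in.
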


\begin{proof} We will prove Statement 3 first. 
So, let $\check f\:X\to S$ be a stable 
model of $f$ with contraction map $\psi\:Y\to X$. Then
$R^1\psi_*\O_Y=0$ by Theorem \ref{famchain}. Furthermore, 
$\psi_*\O_Y$ is invertible and the evaluation map
$v\:\psi^*\psi_*\L\to\L$ is an isomorphism. If
$\psi^\#\:\O_X\to\psi_*\O_Y$ is 
the comorphism, since $v\psi^*(\psi^\#)$ is a natural isomorphism, it follows
that $\psi^*(\psi^\#)$ is an isomorphism, and thus that $\psi^\#$ is
surjective. Since $\psi^\#$ is a surjection between invertible
sheaves, it is an isomorphism.
 
Let $\check\omega$ be the relative dualizing sheaf of $\check f$. Then 
\begin{equation}\label{rd1}
\begin{aligned}
R^1f_*(\psi^*\check\omega)=&R^1\check f_*\psi_*(\psi^*\check\omega)=
R^1\check f_*(\check\omega\ox \psi_*\O_Y)\\
=&R^1\check f_*(\check\omega)=\O_S.
\end{aligned}
\end{equation}
Indeed, the fourth equality in \eqref{rd1} is given by the trace map, 
an isomorphism because the fibers of $\check f$ are connected. The
third equality follows from $\O_X=\psi_*\O_Y$.
The projection formula, which holds because $\check\omega$ is invertible, yields 
the second equality. Finally, the first equality holds because of the degeneration 
of the spectral sequence associated to the composition $\check f\psi$, since
$$
R^1\psi_*(\psi^*\check\omega)=\check\omega\ox R^1\psi_*\O_Y=0.
$$ 

Let $\omega$ be the relative dualizing sheaf of $f$. By \cite{reldual}, Thm.~21, p.~55, 
for each coherent sheaf $\N$ on $S$,
there is a functorial (on $\N$) isomorphism
\begin{equation}\label{rd}
f_*Hom(\psi^*\check\omega,\omega\ox f^*\N)\to Hom(R^1f_*(\psi^*\check\omega),\N).
\end{equation}
Putting the isomorphisms \eqref{rd1} and \eqref{rd} 
together, we get a functorial (on $\N$) isomorphism
\begin{equation}\label{functN}
f_*Hom(\psi^*\check\omega,\omega\ox f^*\N)\to\N.
\end{equation}
In particular, replacing $\N$ by $\O_S$, we get
a natural map $h\:\psi^*\check\omega\to \omega$, corresponding
to the constant function $1_S$. This map is fiberwise
(over $S$) nonzero, a fact that can be shown by
replacing $\N$ by skyscraper sheaves and using
the functoriality of \eqref{functN}.

Since both $\check\omega$ and $\omega$ are invertible, we need only show that 
$h$ is surjective, and thus we may assume that
$S$ is the spectrum of an algebraically closed field.
Now, $\omega$ and $\psi^*\check\omega$ restrict to isomorphic sheaves on
each component $Z$ of $Y$. In fact, it follows from adjunction that
$$
\omega|_Z\cong\L\ox\O_Z\left(\sum_{P\in Z\cap Z'}P\right)\cong
 \psi^*\check\omega|_Z,
$$
where $\L$ is the dualizing sheaf of $Z$. In particular, 
$\omega$ and $\psi^*\check\omega$ have the same multidegree.
Since $h$ is nonzero,
it follows that $h$ is an isomorphism.

We will now prove Statement 1. Let $\omega$ be the relative dualizing sheaf
of $f$ and consider the $S$-scheme:
$$
X:=\text{Proj}_S\big(\O_S\oplus f_*\omega\oplus
f_*(\omega^{\ox 2})\oplus\cdots\big).
$$
Let $\check f\:X\to S$ denote the structure map. 

For each geometric point $s$ of 
$S$, by adjunction, $\omega_s$ has positive degree on each nonexceptional component of 
$Y_s$, and thus, by duality, 
$$
H^1(Y_s,\omega_s^{\ox n})=H^0(Y_s,\omega_s^{\ox 1-n})^*=0\quad\text{for each $n\geq 2$.}
$$
It follows that the direct image $f_*(\omega^{\ox n})$ is locally free, with formation 
commuting with base change, for each $n\geq 2$. Also, $f_*\omega$ is locally free, 
with formation commuting with base change, 
because $R^1f_*\omega\cong\O_S$, the trace map being an isomorphism.
So, $\check f$ is flat, and its formation commutes with base change, so
\begin{equation}\label{Xsf}
X_s=\text{Proj}\big(H^0(Y_s,\O_{Y_s})\oplus H^0(Y_s,\omega_s)\oplus
H^0(Y_s,\omega_s^{\ox 2})\oplus\cdots\big)
\end{equation}
for each geometric point $s$ of $S$.

By \cite{Cat}, Thm.~A, p.~68, the sheaf $\omega_s^{\ox n}$ is globally generated 
for each integer $n\geq 2$ and each geometric points $s$ of $S$. 
Thus, the natural maps $f^*f_*(\omega^{\ox n})\to\omega^{\ox n}$ are surjections for
$n\geq 2$, and hence induce a globally defined $S$-map $\psi\: Y\to X$. 

We need only show now that, for each geometric point $s$ of $S$, the scheme $X_s$ is a stable model of $Y_s$ and 
$\psi_s$ is a contraction map. Indeed, let $Z$ be a stable model of $Y_s$, 
and let $b\:Y_s\to Z$ be a contraction map. Let $\L$ be the dualizing sheaf of 
$Z$. Then $b^*\L\cong\omega_s$ by Statement 3. Since $b_*\O_{Y_s}=\O_Z$, it 
follows that
\begin{equation}\label{LYZ}
H^0(Z,\L^{\ox n})=H^0(Y_s,\omega_s^{\ox n})\quad\text{for each integer $n>0$.}
\end{equation}
On the other hand, since $Z$ is stable, $\L$ is 
ample, and thus
$$
Z=\text{Proj}\big(H^0(Z,\O_Z)\oplus H^0(Z,\L)\oplus
H^0(Z,\L^{\ox 2})\oplus\cdots\big).
$$
It follows now from \eqref{Xsf} and \eqref{LYZ} that there is an isomorphism 
$u\:Z\to X_s$ such that $\psi_s=ub$.
\end{proof}

If $X$ is a scheme and $\Fcal$ is a
coherent sheaf on $X$, let
$$
Sym(\Fcal)=\bigoplus_{n\geq 0}Sym^n(\Fcal)\quad\text{and}\quad
\IP_X(\Fcal):=\text{\rm Proj}(Sym(\Fcal)),
$$
where $Sym^n(\Fcal)$ is the $n$th symmetric product of $\Fcal$, for 
each integer $n\geq 0$.

\begin{Prop}
\label{PXI}
Let $X$ be a curve and $\I$ a torsion-free, rank-1
sheaf on $X$. Set $Y:=\IP_X(\I)$, and let $\psi\:Y\to X$ be
the structure map. Then $\psi$ is a small semistable modification of
$X$. The exceptional components of $Y$ contracted by $\psi$ are the
fibers of $\psi$ over the points of $X$ where $\I$ is not
invertible. In particular, if $X$ is stable, then $Y$ is quasistable
with stable model $X$ and contraction map $\psi$.
\end{Prop}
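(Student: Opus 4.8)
The plan is to work locally on $X$ and identify $\psi\:Y\to X$ with a map $\mu_\eta$ of the kind used to build semistable modifications. Everything in the statement is local on $X$ and compatible with flat base change, so I may pass to completed local rings at closed points. First I would dispose of the easy locus: wherever $\I$ is invertible, $\Sym(\I)$ is the symmetric algebra of an invertible sheaf, whose relative $\text{Proj}$ is the base itself, so $\psi$ is an isomorphism over the open set $U\subseteq X$ where $\I$ is locally free. Since a torsion-free, rank-$1$ sheaf on a nodal curve is locally free away from a finite set of nodes, the complement of $U$ is a finite collection $\N=\{P_1,\dots,P_k\}$ of nodes, and these are exactly the points over which $\psi$ can fail to be an isomorphism.

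The heart of the argument is the local computation of $\text{Proj}(\Sym(\I))$ at a node $P$ where $\I$ is not invertible. Writing $R=\wh\O_{X,P}=K[[u,v]]/(uv)$, the unique non-free torsion-free, rank-$1$ module is $\wh\I_P\cong\mathfrak m=(u,v)$, which (being isomorphic to the pushforward of $\O$ from the normalization of the node) admits the presentation $R^2\xrightarrow{\,\mathrm{diag}(v,u)\,}R^2\to\mathfrak m\to0$. Consequently
$$
\Sym_R(\wh\I_P)\cong R[X_1,X_2]/(vX_1,\,uX_2),
$$
and I would read off the two standard affine charts $D_+(X_1)$ and $D_+(X_2)$ of its $\text{Proj}$. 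Setting $w=X_2/X_1$ and $w'=X_1/X_2$, a direct calculation gives
$$
D_+(X_1)\cong\Spec\big(K[[u]][w]/(uw)\big),\qquad
D_+(X_2)\cong\Spec\big(K[[v]][w']/(vw')\big),
$$
each a node, glued along $ww'=1$. I expect this gluing to be the main obstacle, since one must verify not merely that the fiber over $P$ is a $\IP^1$, but that this $\IP^1$ meets the rest of $Y$ transversally at exactly the two branches of $P$ and that $Y$ is nodal there. The charts show precisely that: the locus $\{u=0\}\cup\{v=0\}$ is a single smooth rational curve $E_P$ mapping to $P$, meeting the strict transforms of the two branches at $w=0$ and $w'=0$, while $\psi^*u=u$ and $\psi^*v=v$ identify those strict transforms with the branches. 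Thus locally $\psi$ inserts one $\IP^1$ at $P$ along the partial normalization.

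Assembling the charts, I would conclude that $Y$ is a connected, reduced, nodal curve of pure dimension $1$, projective over $K$ (connectedness coming from the surjective $\psi$ with connected fibers onto connected $X$), obtained from the partial normalization $\wt X_\N$ of $X$ along $\N$ by inserting, at each $P_i$, a single smooth rational curve meeting $\wt X_\N$ transversally at the two branches over $P_i$. That is, $\psi$ is $X$-isomorphic to $\mu_\eta$ with $\eta\equiv1$, so $\psi$ is a small semistable modification; the contracted fibers are exactly the inserted rational curves, one over each point of $\N$, each exceptional (smooth rational with $k_E=2$), while over every other point the fiber is a single reduced point, giving the second assertion. Finally, if $X$ is stable it has no exceptional components, so the inserted curves meet only strict transforms of $X$ and are therefore isolated exceptional components; hence $Y$ is quasistable. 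Contracting all exceptional components of $Y$ re-glues the branches and recovers $X$, so $X$ is the stable model of $Y$ and $\psi$ is the contraction map, as claimed.
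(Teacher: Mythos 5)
Your proof is correct and follows essentially the same route as the paper's: reduce to the completed local ring at a node where $\I$ is not invertible, identify $\wh\I_P$ with the maximal ideal via the presentation $R^2\xrightarrow{\mathrm{diag}(v,u)}R^2\to\mathfrak m\to 0$, and compute $\mathrm{Proj}(\Sym)$ explicitly; the paper presents the result as the subscheme $uv=sv=tu=0$ of $\mathbf A^2_K\times\IP^1_K$ rather than through the two affine charts, but this is the same computation. Your closing discussion of why no strict transform of a component of a stable $X$ becomes exceptional is actually slightly more careful than the paper, which leaves the quasistability assertion implicit.
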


\begin{proof} Wherever $\I$ is invertible,
$\psi$ is an isomorphism. So, let us analyze $\psi$ on a
neighborhood of a node $P$ of $X$ where $\I$ fails to
be invertible. In fact, consider the
base change of $\psi$ to the
spectrum of the completion $\wh\O_{X,P}$. Since $P$ is a node, where $\I$ fails to 
be invertible, $\wh\I_P\cong\mathfrak m_P$, where $\mathfrak m_P$
is the maximal ideal of $\wh\O_{X,P}$. Also, since $P$ is a
node,
$$
\wh\O_{X,P}\cong\frac{K[[u,v]]}{(uv)},
$$
where $K$ is the base field of $X$. Now, under the above identification,
$$
\mathfrak m_P\cong\frac{\wh\O_{X,P}\oplus\wh\O_{X,P}}
{v\wh\O_{X,P}\oplus u\wh\O_{X,P}}
$$
as  an $\wh\O_{X,P}$-module. 
So, locally analytically, $Y$ is the subscheme of
$\text{\bf A}^2_K\times\IP^1_K$ defined by the equations
$uv=sv=tu=0$, where $u$ and $v$ are the coordinates
of $\text{\bf A}^2_K$ and $s$ and $t$ are
homogeneous coordinates of $\IP^1_K$. Also, $\psi$
is the restriction to $Y$ of the projection
$\text{\bf A}^2_K\times\IP^1_K\to\text{\bf A}^2_K$ onto the first factor. Then 
$Y$ is the union of three lines, the
projective line given by $u=v=0$, and the affine lines
given by $u=s=0$ and $v=t=0$, the latter two not meeting each other, but intersecting the former 
transversally.

As the above reasoning applies to any node $P$ of $X$ where $\I$ fails to be invertible, it follows 
that the singularities of $Y$ are nodes, that $Y$ is a curve, and that $\psi^{-1}(P)$ is a smooth, rational 
component of $Y$ with $k_{\psi^{-1}(P)}=2$ for any such $P$. 
\end{proof}

\begin{Lem}
\label{eskl}
{\rm (E--Kleiman)}
Let $p\:X\to S$ be a flat map and $\Fcal$ a $S$-flat coherent sheaf on 
$X$. Assume
$\Fcal$ is invertible at each associated point of $X$, and
is everywhere locally generated by two sections.
Set $W:=\IP_X(\Fcal)$, and let $w\: W\to X$ be the
structure map. Then $W$ is $S$-flat and
Serre's graded $\O_X$-algebra homomorphism
$$
Sym(\Fcal)\longrightarrow\bigoplus_{n\geq 0}w_*\O_W(n)
$$
is an isomorphism.
\end{Lem}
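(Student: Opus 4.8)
The plan is to reduce to a local computation on $X$ and $S$, realize $W$ as a closed subscheme of $\IP^1_X$, and read off both conclusions from the cohomology of the structure map $\IP^1_X\to X$. First I would reduce: both assertions are local on $X$ and on $S$, so I may assume $X=\Spec A$ and $S=\Spec R$ with $R$ Noetherian, and $\Fcal=\wt M$ for an $R$-flat $A$-module $M$ generated by two elements. A choice of two generators gives a surjection $\O_X^{\oplus2}\to\Fcal$, hence a closed immersion $\iota\:W\hookrightarrow P:=\IP^1_X$ over $X$ with $\O_W(1)=\iota^*\O_P(1)$, whose ideal sheaf $\I_W$ is generated in degree $1$ by $\K:=\ker(\O_X^{\oplus2}\to\Fcal)$. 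Since $\O_X^{\oplus2}$ and $\Fcal$ are $S$-flat, so is $\K$, and $0\to\K\to\O_X^{\oplus2}\to\Fcal\to0$ stays exact on each fiber. Moreover $\Sym$ and $\text{Proj}$ commute with base change, so the formation of $W$ commutes with base change and $W_s=\IP_{X_s}(\Fcal_s)$ for every geometric point $s$.

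Next I would compute Serre's map through $\pi\:P\to X$. Twisting $0\to\I_W\to\O_P\to\O_W\to0$ by $\O_P(n)$ and applying $\pi_*$, using $\pi_*\O_P(n)=\Sym^n\O_X^{\oplus2}$ and $R^1\pi_*\O_P(n)=0$ for $n\ge0$, yields for each $n\ge0$ the exact sequence
$$
0\to\pi_*\I_W(n)\to\Sym^n\O_X^{\oplus2}\to w_*\O_W(n)\to R^1\pi_*\I_W(n)\to0.
$$
Because $\Sym^n\Fcal$ is the cokernel of $\K\ox\Sym^{n-1}\O_X^{\oplus2}\to\Sym^n\O_X^{\oplus2}$, the desired identification $w_*\O_W(n)=\Sym^n\Fcal$, with the comparison map equal to Serre's, reduces to two claims: (i) $\pi_*\I_W(n)$ coincides with the image of $\K\ox\Sym^{n-1}\O_X^{\oplus2}$, i.e.\ no extra sections appear and the degree-$1$ relations already saturate $\I_W$; and (ii) $R^1\pi_*\I_W(n)=0$ for $n\ge0$.

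The crux is proving (i) and (ii), and this is where both hypotheses enter. Away from the locus where $\Fcal$ is not invertible the map $W\to X$ is an isomorphism and there is nothing to check, so the question is local at a point $Q$ where $\Fcal$ fails to be invertible; there $\Fcal$ is, up to completion, isomorphic to the module $\mathfrak m_Q$ appearing in Proposition \ref{PXI}, and $W$ is cut out in $\IP^1_X$ by the two monomial relations exhibited there. A direct check on the two standard charts of $P$ then establishes (i) and (ii); I expect this explicit verification, together with the bookkeeping needed to make it work in families rather than over a field, to be the main obstacle. Granting (i) and (ii), Serre's map is an isomorphism for every $n\ge0$.

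Finally I would deduce $S$-flatness. Since $P\to S$ is flat and the formation of $W$ commutes with base change, I would invoke the fibral flatness criterion: writing $\O_W=\O_P/\I_W$ with $\O_P$ being $S$-flat, the obstruction $\mathrm{Tor}_1^{\O_S}(\O_W,\kappa(s))$ is the kernel of $\I_W\ox\kappa(s)\to\O_{P_s}$, whose image is exactly $\I_{W_s}$ by flatness of $\K$; the fiber computation of the previous step shows this kernel vanishes, so $\O_W$ is $S$-flat. Alternatively, one concludes flatness from the isomorphisms $w_*\O_W(n)\cong\Sym^n\Fcal$ together with cohomology and base change for $P/S$, using the vanishing (ii).
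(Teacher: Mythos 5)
Your framework is reasonable --- embedding $W$ in $\IP^1_X$ via a local two-generator presentation $0\to\K\to\O_X^{\oplus2}\to\Fcal\to0$ and comparing $\bigoplus w_*\O_W(n)$ with $\Sym(\Fcal)$ through the pushforward from $\IP^1_X$ is exactly the right shape of argument. But the proof has a genuine gap: the two claims you label (i) and (ii) --- that the ideal of $W$ in $\IP^1_X$ is saturated in degree $\geq 1$ by the degree-$1$ relations coming from $\K$, and that $R^1\pi_*\I_W(n)=0$ for $n\geq 0$ --- carry the entire content of the lemma, and you do not prove them; you explicitly defer them to ``a direct check on the two standard charts'' and acknowledge they are ``the main obstacle.'' Worse, the route you propose for that check is not available: you reduce to the completed local ring at a point where $\Fcal$ fails to be invertible and assert that there $\Fcal\cong\mathfrak m_Q$ as in Proposition \ref{PXI}. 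That normal form holds only when $X$ is a nodal curve and $\Fcal$ is torsion-free of rank $1$ at a node, whereas the lemma is stated for an arbitrary flat map $p\:X\to S$ and an arbitrary $S$-flat coherent $\Fcal$ that is invertible at associated points and locally $2$-generated; the non-invertible locus need not be a finite set of nodes, and the local structure of $\Fcal$ there is not pinned down by the hypotheses. Your concluding flatness argument also circles back to claim (i) (you need the degree-$n$ piece of $\I_W$ to be the image of $\K\ox\Sym^{n-1}\O_X^{\oplus2}$ fiberwise), so it is not independent of the gap.

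For comparison, the paper does not reprove the isomorphism at all: it cites the proof of Lemma 3.1 of Esteves--Kleiman, where the hypotheses on $\Fcal$ (invertibility at associated points, local generation by two sections) are used to control the kernel $\N$ of the presentation and the resulting exact sequences, and it only supplements that proof with the $S$-flatness of $W$: $\N$ is $S$-flat from the presentation, $V=\IP^1_X$ is $S$-flat, the ideal sheaf of $W$ in $V$ is $S$-flat and its defining exact sequence commutes with base change, whence $W$ is $S$-flat. If you want a self-contained proof you must actually establish (i) and (ii) from the stated hypotheses alone --- in particular, explain how invertibility of $\Fcal$ at the associated points of $X$ forces the saturation and the $R^1$-vanishing --- rather than from a nodal local model you are not entitled to.
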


\begin{proof} We refer to the proof
of \cite{ek}, Lemma 3.1, p.~491 and its notation. To complement the proof, 
we need only observe 
that $W$ is $S$-flat. First, notice
that $\N$ is $S$-flat, because
of the first exact sequence
in the proof. Second, notice that $V$ is $S$-flat,
being a projective bundle over $X$. The structure map is
denoted $v\:V\to X$. Since $\N$ is flat, and $v$ is a
projective bundle map, it follows from the third exact
sequence in the proof that $W$ is a subscheme of $V$ with
a $S$-flat sheaf of ideals. Now, the formation of this third exact
sequence commutes with base change. So $W$ is $S$-flat.
\end{proof}

\begin{Prop}\label{L2I} Let $f\:Y\to S$ be a family of
quasistable curves. Let $\L$ be an invertible sheaf
on $Y$ of degree $d$ on $Y/S$
such that $\deg_E(\L)=1$ for every
exceptional component $E$ of every geometric fiber of $Y/S$.
Let $\check f\:X\to S$ be a stable model of $f$ and
$\psi\:Y\to X$ the contraction map. Let $\I:=\psi_*\L$.
Then the following statements hold:
\begin{enumerate}
\item The direct image $\psi_*\L$ is a torsion-free, rank-1 sheaf on
 $X/S$ of relative degree $d$, whose formation commutes with base change.
\item For each geometric point
$s\in S$ and each node $P$ of $X_s$, the sheaf $\I_s$ is
invertible at $P$ if and only if $\psi$ is an isomorphism
over a neighborhood of $P$.
\item The evaluation map $e\:\psi^*\I\to\L$ is surjective.
\item There is an isomorphism
$u\:Y\to\IP_X(\I)$ over $X$ such that
$u^*\O(1)\cong\L$.
\end{enumerate}
\end{Prop}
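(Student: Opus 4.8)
The plan is to read off Statements~1, 2 and 3 from Theorem~\ref{famchain} and to construct the isomorphism of Statement~4 by hand, reducing it to geometric fibers. The key observation is that, since $Y/S$ is quasistable, each chain of rational curves in a fiber $Y_s$ contracted by $\psi_s$ is a single exceptional component $E$, on which $\deg_E(\L)=1$ by hypothesis. Thus $\L$ has degree $0$ or $1$ (in fact exactly $1$) on every such chain, i.e.\ $\L$ is positively $\psi$-admissible, and in particular $\psi$-admissible. Statement~1 is then exactly Theorem~\ref{famchain}(2). Statement~3 follows because the evaluation map $e\colon\psi^*\I=\psi^*\psi_*\L\to\L$ is the map $v$ of Theorem~\ref{famchain}(3), which is surjective precisely when $\L$ is positively $\psi$-admissible. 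For Statement~2 I would use that the formation of $\I$ commutes with base change (Statement~1) to reduce to a fixed geometric fiber, where $\I_s=\psi_{s*}\L_s$; over a node $P$ where $\psi_s$ is an isomorphism, $\I_s$ is locally the pushforward of $\L_s$ along an isomorphism, hence invertible, while over a node carrying an exceptional component $E$ the local analysis in the proof of Theorem~\ref{famchain}(3) gives that $\psi_{s*}\L_s$ is invertible at $P$ if and only if $\deg_E(\L)=0$, which fails here.

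For Statement~4, the surjection $e\colon\psi^*\I\to\L$ onto an invertible sheaf from Statement~3 determines, by the universal property of $\IP_X(\I)=\text{Proj}(\Sym\,\I)$, a unique $X$-morphism $u\colon Y\to\IP_X(\I)$ with $u^*\O(1)\cong\L$. To see that $u$ is an isomorphism I would invoke the fibral isomorphism criterion. Both $Y$ and $W:=\IP_X(\I)$ are proper and $S$-flat: the flatness of $W$ is Lemma~\ref{eskl} applied to $\I$, which is $S$-flat, invertible at the associated points of $X$, and everywhere locally generated by two sections (being locally $\cong\mathfrak m_P$ at the nodes where it fails to be invertible). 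Since the formation of $\I$ and of the surjection $e$ commutes with base change, one has $W_s=\IP_{X_s}(\I_s)$ and $u_s$ is the morphism attached to $e_s$; hence it suffices to prove that each $u_s$ is an isomorphism.

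Over a geometric point $s$ I would compare $Y_s$ and $W_s$ component by component. By Proposition~\ref{PXI}, $W_s$ is quasistable with stable model $X_s$, its contracted exceptional components lying exactly over the nodes where $\I_s$ is not invertible, which by Statement~2 are precisely the nodes over which $\psi_s$ fails to be an isomorphism. Thus $Y_s$ and $W_s$ have exceptional components over the same nodes, and away from these $u_s$ is an isomorphism, both curves being identified with $X_s$. On an exceptional component $E\subset Y_s$ over such a node $P$, the map $u_s|_E$ lands in the fiber $\pi^{-1}(P)\cong\IP^1$, and since $u_s|_E^*\O(1)=\L|_E$ has degree $1$ while $\O(1)$ has degree $1$ on $\pi^{-1}(P)$, the map $u_s|_E$ is a degree-$1$ morphism $\IP^1\to\IP^1$, hence an isomorphism. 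So $u_s$ is bijective and restricts to an isomorphism on each component; a local check at the nodes, where both curves are ordinary nodes whose two branches are matched by $u_s$, upgrades this to the statement that $u_s$ is an isomorphism. I expect this last step, Statement~4, to be the main obstacle: the morphism $u$ comes for free, but proving it is an isomorphism forces the fibral reduction together with the precise local matching of $Y_s$ and $\IP_{X_s}(\I_s)$ near the exceptional components, where the hypothesis $\deg_E(\L)=1$ and the explicit description of $\IP_X(\I)$ in Proposition~\ref{PXI} are exactly what make $u_s|_E$ an isomorphism.
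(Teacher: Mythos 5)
Your proposal is correct and follows essentially the same route as the paper: Statements 1--3 are read off from Theorem~\ref{famchain} (using that the contracted chains are single exceptional components of degree $1$, so $\L$ is positively $\psi$-admissible), and Statement 4 is obtained from the surjection $e$ via the universal property of $\IP_X(\I)$, reduced to geometric fibers by $S$-flatness (Lemma~\ref{eskl}) and base change, and checked on each exceptional component using Proposition~\ref{PXI} and the degree-$1$ hypothesis.
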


\begin{proof} Statement~1 follows readily from Theorem
  \ref{famchain}, as well as Statement~3. It follows from Statement~3 
that $e$ defines a 
$X$-map $u\:Y\to\IP_X(\I)$ such that $u^*\O(1)\cong\L$.
Then, to prove Statement~4, since
both $Y$ and $\IP_X(\I)$ are $S$-flat, the latter by
Lemma~\ref{eskl}, and the formation of $\I$ commutes with base change
by Statement~1, we need only check that
$u_s$ is an isomorphism for every geometric point $s$
of $S$.

So, for the remainder of the proof, we may now
assume $S$ is the
spectrum of an algebraically closed field. 

The contraction map $\psi$ factors as the composition of several maps,
each contracting a single exceptional component. Thus, to prove
Statement~2 we may assume that $\psi$ contracts a single
component. Then Statement~2 follows from Theorem \ref{famchain} as
well.

As for Statement~4, first observe that $\IP_X(\I)$ is a
quasistable curve isomorphic to $Y$, by 
Proposition \ref{PXI} and Statement 2. 
So, since $u$ is an $X$-morphism, to check that $u$ is an
isomorphism we need only check that, for each exceptional
component $F\subset Y$, the restriction $u|_F$ is an
isomorphism onto the corresponding exceptional component
of $\IP(\I)$. But this is so, because, letting
$R\in X$ denote the point below $F$, the restriction
$u|_F$ is the map to $\IP(\I|_R)$ given by the surjection $e|_F$. 
So, $u|_F$ is an
isomorphism because $e|_F$ is the evaluation map of the degree-1 sheaf $\L|_F$.
\end{proof}

\begin{Prop}\label{I2L}
Let $f\:X\to S$
be a family of curves.
Let $\I$ be a torsion-free, rank-1 sheaf
of degree $d$ on $X/S$. Let $Y:=\IP_X(\I)$,
with structure map $\psi\:Y\to X$,
and let $\L$ denote the tautological invertible
sheaf on $Y$. Then $\psi$ is a small semistable modification of
$X/S$. In particular, if $X/S$ is a family of stable curves, then 
$Y/S$ is a family of quasistable
curves, $X/S$ is its stable model, and $\psi$ is the
contraction map. Furthermore, $\L$ has degree $d$ on $Y/S$, 
the degree of $\L$ on every 
exceptional component contracted by $\psi$ 
of every geometric fiber of $Y/S$ is
$1$, and $\I=\psi_*\L$.
\end{Prop}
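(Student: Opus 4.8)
The plan is to reduce everything to the single-fiber statement of Proposition~\ref{PXI} together with the relative flatness and pushforward supplied by Lemma~\ref{eskl}, glued along the fibers using the compatibility of projectivization with base change. First I would record that the formation of $\IP_X(\I)$ commutes with arbitrary base change $X'\to X$, because $\Sym$ commutes with pullback and the relative $\mathrm{Proj}$ of a graded $\O_X$-algebra generated in degree~$1$ commutes with base change. In particular, for each geometric point $s$ of $S$ one has $Y_s=\IP_{X_s}(\I_s)$, and the structure map $\psi_s\:Y_s\to X_s$ is precisely the one analyzed in Proposition~\ref{PXI}.

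Next I would verify that $\I$ satisfies the hypotheses of Lemma~\ref{eskl}: it is $S$-flat of rank~$1$, and since the associated points of $X$ are generic points of fibers, where $\I$ is invertible, the first hypothesis holds; moreover, a torsion-free, rank-$1$ sheaf on a nodal curve is everywhere locally generated by two sections --- by one section where it is invertible, and by the two branch parameters $u,v$ at a node where it is not, since there $\wh\I_P\cong\mathfrak m_P$. Lemma~\ref{eskl} then yields two things at once: $Y=\IP_X(\I)$ is $S$-flat, and Serre's graded isomorphism $\Sym(\I)\to\bigoplus_{n\geq 0}\psi_*\O_Y(n)$ identifies its degree-$1$ part as $\psi_*\L=\I$. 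Since $\psi$ is projective and $f$ is proper, $f\psi\:Y\to S$ is proper; combined with $S$-flatness and the fact that each $Y_s$ is a connected semistable curve by Proposition~\ref{PXI}, this shows $Y/S$ is a family of curves. As each $\psi_s$ is $X_s$-isomorphic to a map $\mu_{\eta_s}$ with $\eta_s$ constant equal to~$1$ --- the exceptional components being the fibers of $\psi_s$ over the nodes where $\I_s$ is not invertible --- the morphism $\psi$ is a small semistable modification of $X/S$.

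For the degree assertions I would argue fiberwise. Each exceptional component $E$ contracted by $\psi_s$ is the fiber $\IP(\I_s\ox\kappa(P))$ over a node $P$ at which $\I_s$ is not invertible; there $\I_s\ox\kappa(P)$ is two-dimensional, so $E\cong\IP^1$ and $\L|_E=\O_E(1)$ has degree~$1$. Hence $\L$ is positively $\psi$-admissible, in particular $\psi$-admissible, and Theorem~\ref{famchain}(2) shows that $\psi_*\L$ is torsion-free, rank-$1$ of relative degree equal to that of $\L$. Since $\psi_*\L=\I$ has degree~$d$, this forces $\L$ to have relative degree~$d$ over $S$. Finally, in the stable case Proposition~\ref{PXI} gives that each $Y_s$ is quasistable with stable model $X_s$ and contraction $\psi_s$, so $Y/S$ is a family of quasistable curves; and since $\psi$ is a semistable modification of the family of stable curves $X/S$, the pair $(X/S,\psi)$ is by definition a stable model of $Y/S$.

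The main obstacle is the passage from the fiberwise picture of Proposition~\ref{PXI} to the relative statements over $S$: one needs $Y$ to be $S$-flat and $\psi_*\L$ to be well-behaved, neither of which is a fiberwise matter. This is exactly what Lemma~\ref{eskl} supplies, so the crux is checking its hypotheses --- above all that a torsion-free, rank-$1$ sheaf is locally generated by two sections --- and then reading off both the flatness and the identity $\psi_*\L=\I$ directly from its conclusion, after which the degree computation via Theorem~\ref{famchain} and the stable-curve case are routine.
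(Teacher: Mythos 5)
Your proposal is correct and follows essentially the same route as the paper: both rest on Lemma~\ref{eskl} for the $S$-flatness of $Y$ and the identity $\I=\psi_*\L$, on Proposition~\ref{PXI} together with base-change compatibility of $\IP_X(\I)$ for the fiberwise description of $\psi$, and on the tautological nature of $\L$ on the fibers over nodes for the degree-$1$ claim. The only cosmetic difference is that you invoke Theorem~\ref{famchain} directly for the relative-degree computation where the paper cites Statement~1 of Proposition~\ref{L2I}, which itself is just an application of that theorem.
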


\begin{proof} We apply Lemma \ref{eskl} for $\Fcal:=\I$.
The hypotheses are verified because the associated points
of $X$ are generic points of certain fibers of $f$, where $\I$
is invertible, and $\I$ is everywhere locally generated
by two sections, since $X/S$ is a family of nodal curves.
So $Y$ is $S$-flat.

It follows from Lemma \ref{eskl} as well that
$\I=\psi_*\L$. Since 
the formation of $\IP_X(\I)$ commutes with base change,
it follows from Proposition \ref{PXI} that $\psi$ is a semistable 
modification of $X/S$. 

By Proposition \ref{PXI}, the exceptional components contracted by 
$\psi$ of
the geometric fibers of $Y/S$ are the fibers of
$\IP_X(\I)$ over the nodes of the geometric fibers of
$X/S$ where $\I$ is not invertible. Since
$\L$ is the tautological sheaf of $\IP_X(\I)$, its
restriction to a fiber over $X$ is also tautological. So
$\L$ has degree 1 on every exceptional component contracted by 
$\psi$ of
every geometric fiber of $Y/S$.
Finally, that $\L$ has relative degree $d$ over $S$
follows now from Statement 1 of Proposition \ref{L2I}.
\end{proof}

\section{Functorial isomorphisms}\label{fi}

 Let $\P_{d,g}$ be the
contravariant
functor from the category of schemes to that of sets defined
in the following way: For each scheme $S$, let
$\P_{d,g}(S)$ be the set of equivalence classes of
pairs $(f,\L)$, where $f\:Y\to S$ is a family of quasistable curves of
genus $g$ over $S$, and $\L$ is an invertible sheaf on $Y$ of relative degree
$d$ over $S$ whose degree on every exceptional component
of every geometric fiber of $Y/S$ is 1.
Two such pairs, $(f\:Y\to S,\L)$ and
$(f'\:Y'\to S,\L')$, are said to be equivalent if there
are an $S$-isomorphism $u\:Y\to Y'$ and an invertible sheaf
$\N$ on $S$ such that $u^*\L'\cong\L\ox f^*\N$. We leave
it to the reader to define the functor on maps.

On the other hand, let $\Jcal_{d,g}$ be the contravariant
functor from the category of schemes to that of sets defined
in the following way: For each scheme $S$, let
$\Jcal_{d,g}(S)$ be the set of equivalence classes of
pairs $(f,\L)$, where  $f\:X\to S$ is a family of stable curves of
genus $g$ over $S$, and $\I$ is a torsion-free, rank-1 sheaf on $X/S$
of relative degree $d$. Two such pairs,
$(f\:X\to S,\I)$ and
$(f'\:X'\to S,\I')$, are said to be equivalent if there
are an $S$-isomorphism $u\:X\to X'$ and an invertible sheaf
$\N$ on $S$ such that $u^*\I'\cong\I\ox f^*\N$.
Again, we leave it to the reader to define the
functor on maps.

Finally, let $\ol\M_g$ be the usual moduli functor of
stable curves of genus $g$. There are natural ``forgetful''
maps of functors
$\P_{d,g}\to\ol\M_g$, defined
by taking a pair $(f\:Y\to S,\L)$ to the stable model
$X/S$ of $Y/S$, and
$\Jcal_{d,g}\to\ol\M_g$, defined by
taking a pair $(f\:X\to S,\I)$ to $X/S$. The former forgetful 
map is well-defined by Proposition \ref{exstmod}.

\begin{Thm}\label{isof}
There is a natural isomorphism of functors
$$
\Phi\:\P_{d,g}\lra\Jcal_{d,g}
$$
over $\ol\M_g$. The isomorphism $\Phi$ takes a pair
$(f\:Y\to S,\L)$ of a family of quasistable
curves $f$ and an invertible sheaf $\L$ on $Y$
to $(X\to S,\psi_*\L)$, where $X/S$ is the
stable model of $Y/S$ and $\psi\:Y\to X$ is the contraction
map. Its inverse takes a pair 
$(f\:X\to S,\I)$ of a family of stable curves $f$ and a 
torsion-free, rank-1 sheaf $\I$
on $X/S$ to $(\IP_X(\I)\to S,\O(1))$. 
\end{Thm}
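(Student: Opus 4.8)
The plan is to construct the inverse transformation explicitly and then verify, using Propositions \ref{L2I}, \ref{I2L} and \ref{exstmod}, that it is a two-sided inverse to $\Phi$ and that both transformations are natural over $\ol\M_g$. Almost all the geometric content has already been established in those propositions; what remains is chiefly the bookkeeping of the equivalence relations and of base change. First I would define the candidate inverse $\Psi\:\Jcal_{d,g}\to\P_{d,g}$ on objects by sending $(f\:X\to S,\I)$ to $(\IP_X(\I)\to S,\O(1))$, and check that this lands in $\P_{d,g}(S)$. This is precisely the content of Proposition \ref{I2L}: the structure map $\psi\:Y\to X$ is a small semistable modification, $Y/S$ is a family of quasistable curves of genus $g$ with stable model $X/S$, and the tautological sheaf $\L=\O(1)$ has relative degree $d$ with $\deg_E(\L)=1$ for every exceptional component $E$ contracted by $\psi$.

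Next I would check that both $\Phi$ and $\Psi$ descend to equivalence classes, which amounts to tracking the twists by sheaves pulled back from $S$. For $\Phi$, if $(Y,\L)$ and $(Y',\L')$ are equivalent via an $S$-isomorphism $u\:Y\to Y'$ and a line bundle $\N$ on $S$ with $u^*\L'\cong\L\ox f^*\N$, then by the uniqueness in Proposition \ref{exstmod} the stable models are canonically $S$-isomorphic, and the projection formula gives $\psi_*(\L\ox f^*\N)\cong(\psi_*\L)\ox\check f^*\N$, so the images agree in $\Jcal_{d,g}(S)$. For $\Psi$, if $u^*\I'\cong\I\ox f^*\N$, then the canonical $X$-isomorphism $\IP_X(\I\ox f^*\N)\cong\IP_X(\I)$ carries $\O(1)$ to $\O(1)\ox g^*\N$, where $g\:\IP_X(\I)\to S$ is the composite structure map; since $g^*\N$ is pulled back from $S$, this is exactly the twist permitted in the equivalence relation on $\P_{d,g}$, so the images agree there.

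Then I would verify that the two compositions are the identity. For $\Psi\circ\Phi$, start with $(Y,\L)$, form the stable model $\check f\:X\to S$ with contraction map $\psi$ and set $\I:=\psi_*\L$; Statement 4 of Proposition \ref{L2I} furnishes an $X$-isomorphism $u\:Y\to\IP_X(\I)$ with $u^*\O(1)\cong\L$, which is exactly an equivalence $(Y,\L)\sim(\IP_X(\I),\O(1))$. For $\Phi\circ\Psi$, start with $(X,\I)$ and set $Y:=\IP_X(\I)$, $\L:=\O(1)$; Proposition \ref{I2L} identifies $X$ (with $\psi$) as the stable model of $Y$ and yields $\psi_*\L\cong\I$, so $\Phi$ returns $(X,\I)$ up to canonical equivalence. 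Here the uniqueness clause of Proposition \ref{exstmod} is what guarantees that $\Phi$ returns $X$ itself rather than a merely isomorphic copy.

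Finally I would establish naturality and compatibility over $\ol\M_g$. Naturality of $\Phi$ reduces to the fact that the formation of $\psi_*\L$ commutes with base change (Statement 2 of Theorem \ref{famchain}, or Statement 1 of Proposition \ref{L2I}) together with the compatibility of stable models with base change built into Proposition \ref{exstmod}; naturality of $\Psi$ reduces to the commutation of $\IP_X(\I)$ with base change, which holds because $\I$ is $S$-flat and everywhere locally generated by two sections, as used in Proposition \ref{I2L}. Compatibility over $\ol\M_g$ is then immediate, since the forgetful image of $(Y,\L)$ is the stable model $X/S$, which is also the forgetful image of $\Phi(Y,\L)=(X,\psi_*\L)$. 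I expect the main obstacle to be the careful verification that all these isomorphisms are canonical enough to assemble into genuine natural transformations rather than merely fiberwise bijections; but since $\Phi$ and $\Psi$ are built from the functorial constructions of direct image and projectivization, whose base-change compatibility is already in hand, this is bookkeeping rather than a new difficulty.
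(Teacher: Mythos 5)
Your proposal is correct and follows exactly the route the paper takes: the paper's proof is the single line ``Just combine Propositions \ref{L2I} and \ref{I2L},'' and your argument is precisely the detailed unpacking of that combination (Statement 4 of Proposition \ref{L2I} for one composite, Proposition \ref{I2L} for the other, Proposition \ref{exstmod} for uniqueness of stable models, and base-change compatibility for naturality). Nothing is missing and nothing diverges from the paper's approach.
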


\begin{proof} Just combine Propositions \ref{L2I} and
\ref{I2L}.
\end{proof}

Let $g$ and $d$ be integers with $g\geq 2$. Let $Y$ be a curve of genus $g$, and $\omega$ 
a dualizing sheaf of $Y$. The degree-$d$ \emph{canonical polarization}
of $Y$ is the sheaf
\[
\E_d:=\O_Y^{\oplus 2g-3}\oplus\omega^{\otimes g-1-d}.
\]
Let $\I$ be a torsion-free, 
rank-1 sheaf on $Y$ of degree $d$. We say that $\I$ is 
\emph{semistable} (resp.~\emph{stable}) if $\I$ is semistable
(resp.~stable) with respect to $\E_d$. 

Since $\chi(\I)=d+1-g$, and thus
$$
\chi(\I_Z\otimes\E_d|_Z)=(2g-2)\chi(\I_Z)-\chi(\I)\deg_Z(\omega)
$$
for every subcurve $Z\subseteq Y$, it follows that $\I$ is 
semistable (resp.~stable) if and only if
\begin{equation}\label{ssI1}
\chi(\I_Z)\geq\frac{\deg_Z(\omega)}{2g-2}\chi(\I)
\end{equation}
for every subcurve $Z\subseteq Y$ (resp.~with equality only if $Z=Y$). 

If $Y$ is stable, the above 
condition is the same as Seshadri's in \cite{Sesh}, Part 7, Def.~9, p.~153, 
when the 
polarization chosen (in Seshadri's sense) is the so-called canonical: 
If $Y_1,\dots,Y_p$ denote 
the components of $Y$, the 
\emph{canonical polarization} is the 
$p$-tuple $\mathfrak a:=(a_1,\dots,a_p)$ where 
$$
a_i:=\frac{\deg_{Y_i}(\omega)}{2g-2}.
$$
That $\mathfrak a$ is indeed a polarization in 
Seshadri's sense follows from the ampleness of $\omega$, by the stability 
of $Y$. That the above notion of (semi)stability is Seshadri's follows 
from the fact that the nonzero torsion-free quotients of $\I$ 
are the sheaves $\I_Z$ for subcurves $Z$ of $Y$. 

On the other hand, 
$\chi(\I_Z)=\deg_Z(\I)+\chi(\O_Z)$ for each subcurve $Z$ of $Y$. 
Also, it follows from adjunction 
and duality that
$$
\deg_Z(\omega)=\deg(\Fcal)+k_Z=\chi(\Fcal)-\chi(\O_Z)+k_Z=
-2\chi(\O_Z)+k_Z,
$$
where $\F$ is the dualizing sheaf of $Z$. 
Thus, \eqref{ssI1} holds for each proper subcurve $Z\subset Y$ 
if and only if
\begin{equation}\label{ssI2}
\deg_Z(\I)\geq d\left(\frac{\deg_Z(\omega)}{2g-2}\right)-
\frac{k_Z}{2},
\end{equation}
with equality if and only if equality holds in \eqref{ssI1}.

Let $X/S$ be a family of stable curves. A torsion-free, rank-1 
sheaf $\I$ on $X/S$ is said to be \emph{semistable}
(resp.~\emph{stable}) if 
$\I_s$ is semistable (resp.~stable) for each geometric point
$s$ of $S$. Let $\Jcal^{ss}_{d,g}$ (resp.~$\Jcal^s_{d,g}$) 
denote the subfunctor of $\Jcal_{d,g}$ parameterizing the pairs 
$(X/S,\I)$ with $\I$ semistable (resp.~stable) on $X/S$.

According to \cite{CCaCo}, Def.~5.1.1, p.~3756, if $Y$ is quasistable, 
a degree-$d$ invertible sheaf $\L$ on $Y$ is called \emph{balanced} if 
$\deg_E(\L)=1$ for each exceptional 
component $E$ of $Y$ and the ``Basic Inequality'' holds,
\begin{equation}\label{BI}
\Bigg|\deg_Z(\L)-d\left(\frac{\deg_Z(\omega)}{2g-2}\right)\Bigg|\leq
\frac{k_Z}{2},
\end{equation}
for every proper subcurve $Z\subset Y$. Furthermore, 
$\L$ is called \emph{stably balanced} 
if $\L$ is balanced and equality holds in 
\begin{equation}\label{IBI}
\deg_Z(\L)\geq d\left(\frac{\deg_Z(\omega)}{2g-2}\right)
-\frac{k_Z}{2}
\end{equation} 
only if $Z'$ is a union of exceptional components of $Y$. 

Notice that \eqref{BI} for every proper subcurve $Z\subset Y$ is
equivalent to \eqref{IBI} for every proper subcurve $Z\subset Y$,
which is in turn equivalent to 
\begin{equation}\label{uBI}
\deg_Z(\L)\leq d\left(\frac{\deg_Z(\omega)}{2g-2}\right)
+\frac{k_Z}{2}
\end{equation}
for every proper subcurve $Z\subseteq Y$. In addition, if
$\deg_E(\L)=1$ 
for each exceptional component $E$ of
$Y$, then equality holds in \eqref{IBI}  (resp.~\eqref{uBI}) if $Z'$
(resp.~$Z$) is a union of exceptional 
components of $Y$. So, in a formulation analogous to that of
semistability and 
stability, $\L$ is balanced (resp.~stably balanced) if $\deg_E(\L)=1$ for each 
exceptional component $E$ of $Y$ and \eqref{IBI} holds 
for every proper subcurve $Z\subset Y$ (resp.~with equality only if
$Z'$ is a union of exceptional components of $Y$).

Let $Y/S$ be a family of quasistable curves. An invertible
sheaf $\L$ on $Y$ is said to be balanced (resp.~stably balanced) 
on $Y/S$ if
$\L_s$ is balanced (resp.~stably balanced) 
on $Y_s$ for each geometric point
$s$ of $S$. Let $\P^{b}_{d,g}$ (resp.~$\P^{sb}_{d,g}$) 
denote the subfunctor of $\P_{d,g}$ parameterizing the pairs 
$(Y/S,\L)$ with $\L$ balanced (resp.~stably balanced) on $Y/S$.

\begin{Prop}\label{biss}
Let $Y$ be a quasistable curve. Let $X$ be 
its stable model and
$\psi\:Y\to X$ the contraction map.
Let $\L$ be an invertible sheaf
on $Y$ such that $\deg_E(\L)=1$ for every
exceptional component $E\subset Y$. Then
$\L$ is balanced (resp.~stably balanced) if and only if $\psi_*\L$ is 
semistable (resp.~stable).
\end{Prop}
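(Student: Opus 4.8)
The plan is to deduce everything from Theorem~\ref{famchain2} applied to the canonical polarization, after translating ``balanced'' and ``stably balanced'' into statements about the (semi)stability of $\L$ itself. First I would record the two structural inputs that make the comparison possible. By Proposition~\ref{exstmod}(3) we have $\psi^*\omega_X\cong\omega_Y$, where $\omega_X,\omega_Y$ are the dualizing sheaves; hence, writing $\E_d$ for the canonical polarization of $X$, its pullback $\psi^*\E_d=\O_Y^{\oplus 2g-3}\oplus\omega_Y^{\otimes g-1-d}$ is exactly the canonical polarization of $Y$. Moreover, since $Y$ is quasistable its exceptional components are isolated, so every chain of rational curves of $Y$ contracted by $\psi$ is a single exceptional component, on which $\L$ has degree $1$ by hypothesis; thus $\L$ is positively $\psi$-admissible, in particular $\psi$-admissible, and $\psi_*\L$ is torsion-free, rank-$1$ of degree $d$ by Theorem~\ref{famchain}. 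Finally I would note the purely formal fact that, under the standing hypothesis $\deg_E(\L)=1$, the inequality \eqref{IBI} for every proper subcurve of $Y$ is, after the numerical translation leading to \eqref{ssI2}, precisely the assertion that $\L$ is semistable with respect to $\psi^*\E_d$; the balancing condition on exceptional components is built into the hypothesis and is consistent with \eqref{IBI}--\eqref{uBI} on an exceptional component, which there only demand $-1\le\deg_E(\L)\le 1$.

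For the semistable/balanced equivalence I would then invoke the semistable case of Theorem~\ref{famchain2} with $\E=\E_d$: since $\L$ is automatically $\psi$-admissible, $\psi_*\L$ is semistable with respect to $\E_d$ if and only if $\L$ is semistable with respect to $\psi^*\E_d$, which by the previous paragraph holds if and only if $\L$ is balanced.

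The stable case is the main point, and here the stable case of Theorem~\ref{famchain2} is \emph{vacuous}: it would force $\L$ to be $\psi$-invertible, i.e.\ $\deg_E(\L)=0$, contrary to the hypothesis. (Indeed, a balanced $\L$ with $\deg_E(\L)=1$ always attains equality in the semistability inequality on the complement of an exceptional component, so $\L$ is never stable on $Y$; the strictness passes entirely to $\psi_*\L$.) So, assuming $\L$ balanced, I would instead track the equality cases through the subcurve correspondence from the proof of Theorem~\ref{famchain2}. Recall $\L$ is stably balanced iff equality in \eqref{IBI} for a proper $Z\subset Y$ forces $Z'$ to be a union of exceptional components, while $\psi_*\L$ is stable iff \eqref{ssI2} is strict for every proper $W\subset X$. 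For ``stably balanced $\Rightarrow$ stable,'' if some proper connected $W\subset X$ gave equality, then by \eqref{ZW} its minimal lift $Z_W$ with $\psi(Z_W)=W$ would give equality in \eqref{IBI}; but $Z_W'$ contains the non-exceptional components lying over $W'$, hence is not a union of exceptional components, contradicting stable balancedness. For the converse, a proper connected $Z\subset Y$ with equality and $Z'$ not purely exceptional has $W:=\psi(Z)\subsetneq X$, and the inequality chain in the ``if'' part of the proof of Theorem~\ref{famchain2} gives $0=\chi(\L|_Z\otimes\psi^*\E_d|_Z)\ge\chi((\psi_*\L)_W\otimes\E_d|_W)\ge 0$, forcing equality for $W$ and so non-stability of $\psi_*\L$; a disconnected $Z$ reduces to a connected piece by additivity of $\chi$ over connected components together with semistability.

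The delicate point I expect to spend the most care on is exactly this matching of equality cases: verifying that the minimal lift $Z_W$ always has complement containing a non-exceptional component (so that proper subcurves of $X$ never produce the ``allowed'' equalities of stable balancedness), and, in the other direction, that every genuinely new equality on $Y$ --- one whose complement is not purely exceptional --- descends to an equality for a proper subcurve of $X$. The bookkeeping of how exceptional components over interior versus boundary nodes of $W$ enter $Z_W$, together with the observation that adjoining a degree-$1$ exceptional component to a lift leaves the right-hand side of \eqref{IBI} unchanged (such a component has $\deg\omega_Y=0$ and preserves the intersection number with the complement) while raising the left-hand side by $1$, are what guarantee that the minimal lifts are the tight cases and hence that the correspondence is faithful.
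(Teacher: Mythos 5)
Your proposal is correct, but it handles the stable case by a genuinely different route from the paper. For the semistable equivalence you are more direct: you observe that, under the hypothesis $\deg_E(\L)=1$, the balancedness inequality \eqref{IBI} is literally the semistability inequality \eqref{ssI2} for $\L$ itself with respect to $\psi^*\E_d$, and then quote Theorem~\ref{famchain2} verbatim. For the stable case you correctly diagnose that the ``stable'' clause of Theorem~\ref{famchain2} is unusable (it would force $\psi$-invertibility), and you compensate by re-entering the \emph{proof} of that theorem --- the subcurve correspondence $W\leftrightarrow Z=V_1$, the identity \eqref{ZW}, and the inequality chain of the ``if'' part --- to match equality cases on $Y$ (those with $Z'$ not purely exceptional) with equality cases on $X$. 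The paper instead stays at the level of the \emph{statements} of Theorems~\ref{compadm} and \ref{famchain2}: it twists $\L$ to $\I:=\L\otimes\O_Y(\sum E)$, which is negatively $\psi$-admissible and has the same pushforward by Theorem~\ref{compadm}, reformulates ``stable'' as ``$X_i$-quasistable for every $i$,'' and proves combinatorially (via the auxiliary subcurves $Z_1,Z_2$) that stably balanced for $\L$ is equivalent to $Y_i$-quasistability of $\I$ for every $i$, so that the quasistable clause of Theorem~\ref{famchain2} applies. Your approach avoids the twisting trick and the $Z_1,Z_2$ bookkeeping at the cost of being less modular --- it depends on internal ingredients of another proof rather than on its statement --- and you should make explicit the two small reductions you gesture at: that a purely exceptional $Z$ never attains equality in \eqref{IBI} (so $\psi(Z)$ is always a genuine subcurve in the cases that matter), and that disconnected equality subcurves on either side reduce to connected ones by additivity of $\chi$ together with the already-established semistability. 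Both arguments are sound; the paper's buys reusability of Theorem~\ref{famchain2} as a black box, yours buys a shorter and more transparent chain of numerical identities.
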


\begin{proof} Let $d$ be the degree of $\L$ and $\E_d$ the degree-$d$ 
canonical polarization on $X$. Let $\check{\omega}$ be a 
dualizing sheaf of $X$. It follows from 
Proposition~\ref{exstmod} that $\omega:=\psi^*\check{\omega}$ 
is a dualizing sheaf 
of $Y$. Thus $\psi^*\E_d$ is the degree-$d$ canonical polarization of
$Y$. 

Since $\L$ is $\psi$-admissible, it 
follows from Theorem~\ref{famchain} that $\psi_*\L$ is torsion-free,
rank-1 and of degree $d$. Define the invertible sheaf 
\begin{equation}\label{bal-semist}
\I:=\L\otimes\O_Y\left(\textstyle\sum E\right)
\end{equation}
on $Y$, where $E$ runs over the set of components of $Y$ contracted by 
$\psi$. Then $\I$ is negatively $\psi$-admissible. Furthermore, 
$\psi_*\I=\psi_*\L$ by Theorem~\ref{compadm}. We claim first that $\L$ 
  is balanced if and only if $\I$ is semistable. Furthermore, let 
$X_1,\dots,X_p$ be all the components of $X$, and $Y_1,\dots,Y_p$
those of $Y$ such that $\psi(Y_i)=X_i$ for $i=1,\dots,p$. We have that
$\psi_*\I$ is stable if and only if $\psi_*\I$ is $X_i$-quasistable 
with respect to
$\E_d$ for every $i=1,\dots,p$. We claim as well that $\L$ is stably 
balanced if and only if $\I$ is $Y_i$-quasistable with respect to
$\psi^*\E_d$ for every $i=1,\dots,p$. Once the claims are proved, 
an application of 
Theorem~\ref{famchain2} finishes the proof of the proposition.

Let $Z$ be a proper subcurve of $Y$. If $Z$ is a union of exceptional 
components of $Y$, then 
\[
\deg_Z(\L)=-\deg_Z(\I)=k_Z/2,
\]
whence equality holds in \eqref{ssI2} whereas strict inequality holds 
in \eqref{IBI}. On the other hand, if $Z'$ is a union of
exceptional components of $Y$, then strict inequality holds in
\eqref{ssI2} whereas equality holds in \eqref{IBI}.

Assume now that neither $Z$ nor $Z'$ is a union of exceptional components 
of $Y$. Let $n$ (resp.~$n'$) be the number of connected components of $Z'$ (resp.~$Z$) which are exceptional components of $Y$. Let $Z_1$ (resp.~$Z_2$) be the subcurve of $Y$ obtaining by removing from (resp.~adding to) $Z$ all 
the exceptional components $E$ of $Y$ intersecting $Z'$ (resp.~$Z$) at 
exactly 1 or 2 points. Then $Z_1$ and $Z_2$ are proper subcurves of $Y$ 
such that
\[
k_{Z_1}+2n'=k_{Z_2}+2n=k_Z
\]
and 
\[
\deg_{Z_1}(\omega)=\deg_Z(\omega)=\deg_{Z_2}(\omega).
\]
 Furthermore, 
\[
\deg_{Z_1}(\L)-n'\leq\deg_Z(\I)\text{ and }\deg_{Z_2}(\I)-n\leq\deg_Z(\L).
\]
So \eqref{ssI2} holds for $Z$ replaced by $Z_2$ only if
\eqref{IBI} 
holds, whereas \eqref{IBI} holds for $Z$ replaced by $Z_1$ 
only if \eqref{ssI2} holds. Furthermore, equality holds in 
\eqref{IBI} only if equality holds in \eqref{ssI2} for $Z$
replaced by $Z_2$. Since $Z_2$ contains some $Y_i$, this is not possible
if $\I$ is $Y_i$-quasistable for every $i=1,\dots,p$.  Also, equality
holds in \eqref{ssI2} only if it holds in \eqref{IBI} for $Z$
replaced by $Z_1$. Since $Z'_1$ is not a union of exceptional
components of $Y$, this is not possible if $\L$ is stably
balanced. 
\end{proof}

\begin{Thm}\label{isofcor}
The isomorphism of functors $\Phi$ of Theorem \ref{isof} restricts 
to isomorphisms of functors
$$
\Phi^b\:\P^b_{d,g}\lra\Jcal^{ss}_{d,g}\quad\text{and}\quad
\Phi^{sb}\:\P^{sb}_{d,g}\lra\Jcal^{s}_{d,g}.
$$
\end{Thm}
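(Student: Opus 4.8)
The plan is to deduce the theorem almost immediately from Proposition~\ref{biss}, together with the fact, already established in Theorem~\ref{isof}, that $\Phi$ is an isomorphism of functors $\P_{d,g}\to\Jcal_{d,g}$. The key observation is that all four subfunctor conditions---balanced, stably balanced, semistable, stable---are defined fiberwise over the geometric points of the base, so it suffices to check the correspondence one geometric fiber at a time.

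First I would fix a pair $(f\:Y\to S,\L)$ in $\P_{d,g}(S)$, with stable model $\check f\:X\to S$ and contraction map $\psi\:Y\to X$, so that $\Phi(f,\L)=(\check f,\psi_*\L)$. Because $Y/S$ is quasistable and $\deg_E(\L)=1$ on each exceptional component, every chain of rational curves over a node of a fiber $X_s$ is a single exceptional component on which $\L$ has degree $1$; hence $\L$ is $\psi$-admissible (indeed positively so). By Theorem~\ref{famchain}, Statement~2, the sheaf $\psi_*\L$ is torsion-free, rank-$1$ of relative degree $d$, and its formation commutes with base change, which gives the identification $(\psi_*\L)_s=(\psi_s)_*(\L_s)$ for every geometric point $s$ of $S$.

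Next I would apply Proposition~\ref{biss} fiberwise. For each geometric point $s$, the fiber $Y_s$ is a quasistable curve with stable model $X_s$ and contraction $\psi_s$, and $\L_s$ has degree $1$ on each exceptional component of $Y_s$. The proposition therefore gives: $\L_s$ is balanced if and only if $(\psi_s)_*(\L_s)$ is semistable, and $\L_s$ is stably balanced if and only if $(\psi_s)_*(\L_s)$ is stable. Combining this with the base-change identification above, $\L$ is balanced (resp.~stably balanced) on $Y/S$ if and only if $\psi_*\L$ is semistable (resp.~stable) on $X/S$. In other words, $(f,\L)\in\P^b_{d,g}(S)$ if and only if $\Phi(f,\L)\in\Jcal^{ss}_{d,g}(S)$, and likewise with ``stably balanced'' and ``stable.''

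Finally, since $\Phi$ is already a bijection on each $\P_{d,g}(S)\to\Jcal_{d,g}(S)$ by Theorem~\ref{isof}, the equivalence just established shows that $\Phi$ carries $\P^b_{d,g}(S)$ bijectively onto $\Jcal^{ss}_{d,g}(S)$ and $\P^{sb}_{d,g}(S)$ bijectively onto $\Jcal^s_{d,g}(S)$, compatibly in $S$; naturality is inherited from that of $\Phi$. This yields the desired restricted isomorphisms $\Phi^b$ and $\Phi^{sb}$. I do not anticipate a genuine obstacle here: the entire content is packaged in Proposition~\ref{biss}, and the only point requiring slight care is the base-change compatibility needed to pass between the absolute condition on $\psi_*\L$ and the fiberwise conditions on the $(\psi_s)_*(\L_s)$, which is supplied by Theorem~\ref{famchain}.
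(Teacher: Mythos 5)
Your proposal is correct and follows exactly the paper's own route: the paper's proof is the one-liner ``Just combine Theorem~\ref{isof} with Proposition~\ref{biss},'' and your write-up simply fleshes out the fiberwise reduction and the base-change compatibility from Theorem~\ref{famchain} that make that combination legitimate.
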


\begin{proof} Just combine Theorem \ref{isof} with Proposition \ref{biss}.
\end{proof}

\vskip0.5cm 

{\smallsc Instituto Nacional de Matem\'atica Pura e Aplicada, 
Estrada Dona Castorina 110, 22460--320 Rio de Janeiro RJ, Brazil}

{\smallsl E-mail address: \small\verb?esteves@impa.br?}

\vskip0.5cm

{\smallsc Universidade Federal Fluminense, Instituto de Matem\'{a}tica,
Rua M\'{a}rio Santos Braga, s/n, Valonguinho, 24020--005 Niter\'{o}i RJ, 
Brazil}

{\smallsl E-mail address: \small\verb?pacini@impa.br?}

\end{document}